\let\csname equation*\endcsname\relax
\let\csname endequation*\endcsname\relax
\newtheorem{Proposition}{Proposition}
\newtheorem{Corollary}[Proposition]{Corollary}
\theoremstyle{remark}
\newtheorem{Remark}{Remark}
\begin{document}

\title[Coupling coefficients of $su_q(1,1)$ \& multivariate $q$-Racah polynomials]{Coupling coefficients of $su_q(1,1)$ \\ and multivariate $q$-Racah polynomials}

\author{Vincent X. Genest}
\address{Department of Mathematics, Massachusetts Institute of Technology, Cambridge, MA 02139, USA}
\ead{vxgenest@mit.edu}

\author{Plamen Iliev}
\address{School of Mathematics, Georgia Institute of Technology, Atlanta, GA 30332, USA}
\ead{iliev@math.gatech.edu}

\author{Luc Vinet}
\address{Centre de recherches math\'ematiques, Universit\'e de Montr\'eal, Montr\'eal, QC H3C 3J7, Canada}
\ead{luc.vinet@umontreal.ca}

\begin{abstract}
Gasper \& Rahman's multivariate $q$-Racah polynomials are shown to arise as connection coefficients between families of multivariate $q$-Hahn or $q$-Jacobi polynomials. The families of $q$-Hahn polynomials are constructed as nested Clebsch--Gordan coefficients for the positive-discrete series representations of the quantum algebra $su_q(1,1)$. This gives an interpretation of the multivariate $q$-Racah polynomials in terms of $3nj$ symbols. It is shown that the families of $q$-Hahn polynomials also arise in wavefunctions of $q$-deformed quantum Calogero--Gaudin superintegrable systems.\\

\noindent{\it Keywords\/}: multivariate $q$-Racah polynomials, representations of $su_q(1,1)$, Clebsch-Gordan coefficients, superintegrable systems.
\end{abstract}

\section*{Introduction}

This paper shows that Gasper \& Rahman's multivariate $q$-Racah polynomials arise as the connection coefficients between two families of multivariate $q$-Hahn or $q$-Jacobi polynomials. The two families of $q$-Hahn polynomials are constructed as nested Clebsch--Gordan coefficients for the positive-discrete series representations of the quantum algebra $su_q(1,1)$. This  result gives an algebraic interpretation of the multivariate $q$-Racah polynomials as recoupling coefficients, or $3nj$-symbols, of $su_q(1,1)$. It is also shown that the families of $q$-Hahn polynomials arise in wavefunctions of $q$-deformed quantum Calogero--Gaudin superintegrable systems of arbitrary dimension.

The multivariate $q$-Racah polynomials considered in this paper were originally introduced by Gasper and Rahman in \cite{Gasper&Rahman_2007} as $q$-analogs of the multivariate Racah polynomials defined by Tratnik in \cite{Tratnik1991a, Tratnik1991}. These $q$-Racah polynomials sit at the top of a hierarchy of orthogonal polynomials that extends the Askey scheme of (univariate) $q$-orthogonal polynomials; Tratnik's Racah polynomials and their descendants similarly generalize the Askey scheme at $q=1$. These two hierarchies will be referred to as the Gasper--Rahman and Tratnik schemes, respectively. The Gasper--Rahman scheme of multivariate $q$-orthogonal polynomials should be distinguished from the other multivariate extension of the Askey scheme based on root systems, which includes the Macdonald--Koornwinder polynomials \cite{Koornwinder1992} and the $q$-Racah polynomials defined by van Diejen and Stokman \cite{Diejen1998}. 

Like the families of univariate polynomials from the Askey scheme, the polynomials of the Gasper--Rahman and the Tratnik schemes are bispectral. Indeed, as shown by Iliev in \cite{Iliev2011}, and by Geronimo and Iliev in \cite{Geronimo2010}, these polynomials simultaneously diagonalize a pair of commutative algebras of operators that act on the degrees and on the variables of the polynomials, respectively. The bispectral property is a key element in the link between these families of polynomials, superintegrable systems, recoupling of algebra representations, and connection coefficients of multivariate orthogonal polynomials. Recall that a quantum system with $d$ degrees of freedom governed by a Hamiltonian $H$ is deemed maximally superintegrable if it admits $2d-1$ algebraically independent symmetry operators, including $H$ itself, that commute with the Hamiltonian \cite{Miller2013}. 

For the univariate Racah polynomials, one has the following picture \cite{Genest2014}. First, upon considering the 3-fold tensor product representations of $su(1,1)$, one finds that the two intermediate Casimir operators associated to adjacent pairs of representations in the tensor product satisfy the (rank one) Racah algebra, which is also the algebra generated by the two operators involved in the bispectral property of the univariate Racah polynomials. This leads to the identification of the Racah polynomials as $6j$ (or Racah) coefficients of $su(1,1)$, which are the transition coefficients between the two eigenbases corresponding to the diagonalization of the intermediate Casimir operators. Second, if one chooses the three representations being tensored to belong to the positive-discrete series, the total Casimir operator for the 3-fold tensor product representation can be identified with the Hamiltonian of the so-called generic superintegrable system on the 2-sphere, and the intermediate Casimir operators correspond to its symmetries. Finally, one obtains the interpretation of the univariate Racah polynomials as connection coefficients between two families of 2-variable Jacobi polynomials that arise as wavefunctions of the superintegrable Hamiltonian. For a review of the connection between the Askey scheme and superintegrable systems, see \cite{Kalnins2013}. For a review of the approach just described, the reader can also consult \cite{Genest2014b, Genest2015}.

The picture described above involving the one-variable Racah polynomials has recently been fully generalized to Tratnik's multivariate Racah polynomials. In \cite{Iliev2015}, Iliev and Xu have shown that these polynomials arise as connection coefficients between bases of multivariate Jacobi polynomials on the simplex \cite{DX} and used this to compute connection coefficients for families of discrete classical orthogonal polynomials studied in \cite{IX2007}, as well as for orthogonal polynomials on balls and spheres. In \cite{Iliev2016}, Iliev has established the connection between the bispectral operators for the multivariate Racah polynomials and the symmetries of the generic superintegrable system on the $d$-sphere. In \cite{DeBie2016}, De Bie, Genest, van de Vijver and Vinet have unveiled the relationship between this superintegrable model, $d$-fold tensor product representations of $su(1,1)$ and the higher rank Racah algebra. See also \cite{Genest2014a, Post2015, Rosengren1999}.

Many of these results have yet to be extended to the $q$-deformed case. The interpretation of the univariate $q$-Racah polynomials as $6j$ coefficients for the quantum algebra $su_q(1,1)$ is well known \cite{Vilenkin1992}, as is its relation with the Zhedanov algebra and the operators involved in the bispectrality of the one-variable $q$-Racah polynomials \cite{Granovskii1993, Zhedanov1991}. Dunkl has also shown  in \cite{Dunkl1980} that these polynomials arise as connection coefficients between bases of two-variable $q$-Jacobi or $q$-Hahn polynomials. On the multivariate side, Rosengren has observed that $q$-Hahn polynomials arise by considering nested Clebsch--Gordan coefficients for $su_q(1,1)$  and derived some explicit formulas \cite{Rosengren2001}. Moreover, Scarabotti has examined similar families of multivariate $q$-Hahn polynomials associated to binary trees and their connection coefficients \cite{Scarabotti2011}. 

Nevertheless, the identification of the multivariate $q$-Racah polynomials as $3nj$ coefficients of the quantum algebra $su_q(1,1)$ has not been achieved. Moreover, the connection between $q$-Racah polynomials, both univariate and multivariate, and superintegrable systems remains to be determined. The present paper addresses these questions. As stated above, it will be shown that Gasper \& Rahman's multivariate $q$-Racah polynomials arise as connection coefficients between bases of multivariate orthogonal $q$-Hahn or $q$-Jacobi polynomials. The bases will be constructed using the nested Clebsch--Gordan coefficients for multifold tensor product representations of $su_q(1,1)$, which will provide the exact interpretation of the multivariate $q$-Racah polynomials in terms of coupling coefficients for that quantum algebra. Finally, we will indicate how these bases also serve as eigenbases for $q$-deformed Calogero--Gaudin superintegrable systems.

The paper is organized as follows. In Section \ref{se1}, background material on $su_q(1,1)$ and its positive-discrete representations is provided. In Section \ref{se2}, the generalized Clebsch--Gordan problem of $su_q(1,1)$ is considered. The bases of multivariate $q$-Hahn polynomials are introduced and their bispectrality is related to commutative subalgebras of $su_q(1,1)^{\otimes d}$. In Section \ref{se3}, it is shown that the multivariate $q$-Racah polynomials arise as connecting coefficients between these bases. The proof of the one variable case relies on a new generating function argument. In Section \ref{se4}, the connection with superintegrability is established. We conclude with an outlook. 

\section{Basics of $su_q(1,1)$}\label{se1}
This section provides the necessary background material on the quantum algebra $su_q(1,1)$. In particular, the coproduct and the intermediate Casimir operators are introduced, and the representations of the positive-discrete series are defined.

\subsection{$su_q(1,1)$, tensor products and Casimir operators}
Let $q$ be a real number such that $0 < q < 1$. The quantum algebra $su_q(1,1)$ has three generators $A_0$, $A_{\pm}$ that satisfy the defining relations
\begin{align}
\label{SUq-Relations}
A_{-}A_{+} - q A_{+} A_{-} = \frac{q^{2A_0}-1}{q^{1/2}-q^{-1/2}},\qquad [A_{0}, A_{\pm}] = \pm A_{\pm},
\end{align}
where $[A,B] = AB-BA$. Upon taking $\widetilde{A}_{+} = A_{+} q^{-A_0/2}$, $\widetilde{A}_{-} = q^{-A_0/2} A_{-}$ and $\widetilde{A}_0 = A_0$, one recovers the defining relations of $su_q(1,1)$ in their usual presentation, that is
\begin{align*}
[\widetilde{A}_{-}, \widetilde{A}_{+}] =\frac{q^{\widetilde{A}_0}-q^{-\widetilde{A}_0}}{q^{1/2}-q^{-1/2}}, \qquad 
[\widetilde{A}_0,\widetilde{A}_{\pm}] = \pm \widetilde{A}_{\pm}.
\end{align*}
The Casimir operator $\Gamma$, which commutes with all generators, has the expression
\begin{align}
\label{SUq-Casimir}
\Gamma = \frac{q^{-1/2} q^{A_0} + q^{1/2} q^{-A_0}}{(q^{1/2}-q^{-1/2})^2} - A_{+} A_{-} q^{1-A_0}.
\end{align}
The coproduct map $\Delta : su_q(1,1) \rightarrow su_q(1,1) \otimes su_q(1,1)$ is defined as
\begin{align*}
\Delta (A_0) = A_0 \otimes 1 + 1 \otimes A_0,\qquad \Delta(A_{\pm}) = A_{\pm} \otimes 1 + q^{A_0}\otimes A_{\pm}.
\end{align*}
The coproduct $\Delta$ can be iterated to obtain embeddings of $su_q(1,1)$ into higher tensor powers. For a positive integer $d$, let $\Delta^{(d)}: su_q(1,1)\rightarrow su_q(1,1)^{\otimes d}$ be defined by
\begin{align}
\label{Coproduct}
\Delta^{(d)}= (1^{\otimes (d-2)}\otimes \Delta) \circ \Delta^{(d-1)}, \quad \Delta^{(1)} = \mathrm{Id},
\end{align}
where $\mathrm{Id}$ stands for the identity; one has $\Delta^{(2)} = \Delta$. 

For $1\leq i < j \leq d$, let $[i;j]$ denote the set $\{i, i+1, \ldots, j\}$. To each set $[i;j]$, one can associate a realization of $su_q(1,1)$ within $su_q(1,1)^{\otimes d}$. Denoting by $A_0^{(k)}$, $A_{\pm}^{(k)}$ the generators of the $k$\textsuperscript{th} factor of $su_q(1,1)$ in $su_q(1,1)^{\otimes d}$, the realization associated to the set $S=[i;j]$ has for generators
\begin{align}
\label{Gen-Rep}
A_0^{S} = \sum_{k=i}^{j}A_0^{(k)}, \qquad A_{\pm}^{S} = \sum_{k=i}^{j} q^{\sum_{\ell = i}^{k-1}A_0^{(\ell)}}A_{\pm}^{(k)}.
\end{align}
To each set $S=[i;j]$, one can thus associate an \emph{intermediate} Casimir operator $\Gamma^{S}$ defined as
\begin{align}
\label{Intermediate-Casimir}
\Gamma^{S} = \frac{q^{-1/2}q^{A_0^S} + q^{1/2}q^{-A_0^S}}{(q^{1/2}-q^{-1/2})^2} - A_{+}^S A_{-}^S q^{1-A_0^S}.
\end{align}
For a given value of $d$, the Casimir operator $\Gamma^{[1;d]}$ will be referred to as the \emph{full} Casimir operator. 
\subsection{Representations of the positive-discrete series}
Let $\alpha>0$ be a positive real number and let $V^{(\alpha)}$ be an infinite-dimensional vector space with orthonormal basis $e_{n}^{(\alpha)}$, where $n$ is a non-negative integer. The space $V^{(\alpha)}$ supports an irreducible representation of $su_q(1,1)$ defined by the actions
\begin{align}
\label{Actions}
A_0 e_{n}^{(\alpha)} = (n + (\alpha + 1)/2) e_{n}^{(\alpha)},\quad A_{+}e_{n}^{(\alpha)} = \sqrt{\sigma_{n+1}^{(\alpha)}} e_{n+1}^{(\alpha)},\qquad A_{-}e_{n}^{(\alpha)} = \sqrt{\sigma_{n}^{(\alpha)}} e_{n-1}^{(\alpha)},
\end{align}
with $\langle e_i, e_j\rangle = \delta_{ij}$ and where $\sigma_n$ is given by
\begin{align}
\label{Matrix-Elements}
\sigma_n^{(\alpha)} = q^{1/2}\frac{(1-q^n)(1-q^{n+\alpha})}{(1-q)^{2}}.
\end{align}
Note that $\sigma_0^{(\alpha)} = 0$ and that when $0 < q <1$, one has $\sigma_n^{(\alpha)} > 0$ for $n\geq 1$. It follows that $A_{\pm}^{\dagger}=A_{\mp}$, $A_0^{\dagger}=A_{0}$ as well as $\Gamma^{\dagger} = \Gamma$ on $V^{(\alpha)}$. The $su_q(1,1)$-modules $V^{(\alpha)}$ belong to the positive-discrete series. On $V^{(\alpha)}$, the Casimir operator \eqref{SUq-Casimir} acts as a multiple of the identity. Indeed, one easily verifies using \eqref{SUq-Casimir} and \eqref{Actions} that
\begin{align}
\label{Gamma}
\Gamma e_{n}^{(\alpha)} = \gamma(\alpha)\,e_{n}^{(\alpha)},\qquad \gamma(\alpha) = \frac{q^{\alpha/2} + q^{-\alpha/2}}{(q^{1/2}-q^{-1/2})^2}.
\end{align}
With the help of the nested coproduct defined in \eqref{Coproduct}, one can define tensor product representations of $su_q(1,1)$. Let $\bm{\alpha} = (\alpha_1, \alpha_2,\ldots, \alpha_{d})$ with $\alpha_i > 0$ be a $d$-dimensional multi-index and let $W^{(\bm{\alpha})}$ be the $d$-fold tensor product
\begin{align}
\label{d-Fold}
W^{(\bm{\alpha})} = V^{(\alpha_1)} \otimes \cdots \otimes  V^{(\alpha_d)}.
\end{align}
As per \eqref{Gen-Rep}, the space $W^{(\bm{\alpha})}$ supports a representation of $su_q(1,1)$ realized with the generators $A_0^{[1;d]}$, $A_{\pm}^{[1;d]}$. The space has a basis $e_{\bm{y}}^{(\bm{\alpha})}$ defined by
\begin{align}
\label{Direct-Product-Basis}
e_{\bm{y}}^{(\bm{\alpha})} = e_{y_1}^{(\alpha_1)}\otimes \cdots \otimes e_{y_{d}}^{(\alpha_d)},\qquad \langle e_{\bm{y}}^{(\bm{\alpha})}, e_{\bm{y}'}^{(\bm{\alpha})} \rangle = \delta_{\bm{y}\bm{y}'},
\end{align}
where $\bm{y} = (y_1,\ldots, y_{d})$ is a multi-index of non-negative integers. The action of the generators $A_0^{[1;d]}$, $A_{\pm}^{[1;d]}$ on the basis vectors \eqref{Direct-Product-Basis} is easily obtained by combining \eqref{Gen-Rep} and \eqref{Actions}. As a representation space, $W^{(\bm{\alpha})}$ is reducible and has the following decomposition in irreducible components:
\begin{align}
\label{Decompo}
W^{(\bm{\alpha})} = \bigoplus_{k = 0}^{\infty} m_k V^{(2k + A_d + d -1)},\qquad A_k = \sum_{i=1}^{k}\alpha_i, \qquad m_k = \binom{k+d-2}{k}.
\end{align}
When $d=2$, a detailed proof of the decomposition~\eqref{Decompo} can be found in \cite{Shibukawa1992}, see Theorem 2.1. The proof in arbitrary dimension follows easily by induction on $d$, using the identity
$$\sum_{k=0}^{s}\binom{k+a}{k}=\binom{s+a+1}{s}.$$
It follows from \eqref{Decompo} that the total Casimir operator $\Gamma^{[1;d]}$ defined by \eqref{Intermediate-Casimir} is diagonalizable on $W^{(\bm{\alpha})}$. Its eigenvalues $\lambda^{[1;d]}_{k}$ are given by
\begin{align*}
\lambda^{[1;d]}_{k} = \gamma(2k + A_d + d -1),\qquad k = 0,1,2,\ldots,
\end{align*}
where $\gamma(\alpha)$ is given by \eqref{Gamma}; these eigenvalues have multiplicity $m_k$.
\section{Multivariate $q$-Hahn bases \\ and the Clebsch--Gordan problem}\label{se2}
In this section, two orthogonal bases of multivariate $q$-Hahn polynomials are constructed in the framework of the generalized Clebsch--Gordan problem for $su_q(1,1)$. The connection between the standard Clebsch--Gordan problem involving two-fold tensor product representations and the one-variable $q$-Hahn polynomials is first reviewed, and then generalized to the multifold tensor product case. Two related bases of multivariate $q$-Jacobi polynomials are also introduced through a limit.
\subsection{Univariate $q$-Hahn polynomials and Clebsch--Gordan coefficients}
We first consider the $d=2$ case where $W^{(\alpha_1,\alpha_2)} = V^{(\alpha_1)} \otimes V^{(\alpha_2)}$. In addition to the direct product basis \eqref{Direct-Product-Basis}, the space $W^{(\alpha_1,\alpha_2)}$ admits another basis which is associated to its multiplicity-free decomposition \eqref{Decompo} in irreducible components. These basis elements $f_{n_1,n_2}^{(\alpha_1,\alpha_2)}$ are defined by the eigenvalue equations
\begin{align*}
\Gamma^{[1;2]} f_{n_1,n_2}^{(\alpha_1,\alpha_2)} = \gamma(2n_1 + A_2 +1)f_{n_1,n_2}^{(\alpha_1,\alpha_2)},\quad q^{A_0^{[1;2]}} f_{n_1,n_2}^{(\alpha_1,\alpha_2)} = q^{n_1 + n_2 +(A_2 + 2)/2} f_{n_1,n_2}^{(\alpha_1,\alpha_2)},
\end{align*}
with $n_1, n_2$ non-negative integers and where $A_2 = \alpha_1 + \alpha_2$.
\begin{Remark}
\label{rem0}
In the expansion $V^{(\alpha_1, \alpha_2)} = \bigoplus_{k=0}^{\infty} V^{(2k + \alpha_1 + \alpha_2 +1)}$, the ``coupled'' basis vector $f_{n_1,n_2}^{(\alpha_1,\alpha_2)} $ corresponds to $e_{n_2}^{(2n_1 + \alpha_1 + \alpha_2 +1)}\in V^{(2n_1 + \alpha_1 + \alpha_2 +1)}$.
\end{Remark}
The Clebsch--Gordan coefficients are the (real) expansion coefficients between the coupled basis $f_{n_1,n_2}^{(\alpha_1,\alpha_2)}$ defined above and the direct product basis $e_{y_1,y_2}^{(\alpha_1,\alpha_2)} = e_{y_1}^{(\alpha_1)}\otimes e_{y_2}^{(\alpha_2)}$. One has
\begin{align}
\begin{aligned}
\label{2-Clebsch}
f_{n_1,n_2}^{(\alpha_1,\alpha_2)} = \sum_{y_1, y_2} 
C_{n_1,n_2}^{(\alpha_1,\alpha_2)}(y_1,y_2) \; e_{y_1,y_2}^{(\alpha_1,\alpha_2)},
\qquad 
e_{y_1,y_2}^{(\alpha_1,\alpha_2)} = \sum_{n_1,n_2} 
C_{n_1,n_2}^{(\alpha_1,\alpha_2)}(y_1,y_2) \; f_{n_1,n_2}^{(\alpha_1,\alpha_2)},
\end{aligned}
\end{align}
where $C_{n_1,n_2}^{(\alpha_1,\alpha_2)}(y_1,y_2) = \langle f_{n_1,n_2}^{(\alpha_1,\alpha_2)}, e_{y_1,y_2}^{(\alpha_1,\alpha_2)}\rangle =
\langle e_{y_1,y_2}^{(\alpha_1,\alpha_2)}, f_{n_1,n_2}^{(\alpha_1,\alpha_2)} \rangle$ are the Clebsch--Gordan coefficients. These coefficients vanish unless $n_1 + n_2 = y_1 + y_2$. They enjoy the explicit expression \cite{Vilenkin1992}
\begin{align}
\label{Clebsch-q-Hahn}
C_{n_1,n_2}^{(\alpha_1,\alpha_2)}(y_1,y_2) = \delta_{n_1+n_2, y_1+y_2} \; \widehat{h}_{n_1}(y_1, \alpha_1, \alpha_2, y_1 + y_2;q),
\end{align}
where $\widehat{h}_{n}(x,\alpha,\beta, N;q)$ are the orthonormal $q$-Hahn polynomials multiplied by the square root of their weight function. One has
\begin{align}
\label{H-Hat}
\widehat{h}_{n}(x,\alpha,\beta, N;q) = \sqrt{\frac{\omega(x;\alpha,\beta,N;q)}{\eta(n,\alpha,\beta,N;q)}} \; h_{n}(x,\alpha,\beta,N;q),
\end{align}
with $h_{n}(x,\alpha,\beta,N;q)$ the $q$-Hahn polynomials \cite{Koekoek2010}
\begin{align}
\label{q-Hahn}
h_{n}(x,\alpha,\beta,N;q) = (q^{\alpha+1};q)_{n} (q^{-N};q)_{n}\;{}_3\phi_2\left(\genfrac{}{}{0pt}{}{q^{-n}, q^{n+\alpha+\beta+1}, q^{-x}}{q^{\alpha+1}, q^{-N}}; q, q\right),
\end{align}
where $(a;q)_{n}$ is the $q$-Pochhammer symbol
\begin{align*}
(a;q)_{n} = (1-a) (1-aq)\cdots (1-a q^{n-1}),\qquad (a;q)_{0} = 1,
\end{align*}
for $n$ a non-negative integer, and where ${}_r\phi_s$ is the basic hypergeometric series \cite{Gasper2004}
\begin{align*}
{}_r\phi_{s}\left(\genfrac{}{}{0pt}{}{a_1, \ldots, a_{r}}{b_1,\ldots, b_{s}}; q, z\right)
=
\sum_{k=0}^{\infty}
\frac{(a_1;q)_{k}\cdots (a_r;q)_{k}}{(q;q)_{k}(b_1;q)_{k}\cdots (b_s;q)_{k}} \left[(-1)^{k}q^{\binom{k}{2}}\right]^{1+s-r} z^{k}.
\end{align*}
The weight function $\omega(x;\alpha,\beta,N;q)$ and the normalization coefficient $\eta(n,\alpha,\beta,N;q)$ have the expressions
\begin{align*}
\omega(x;\alpha,\beta,N;q) = \frac{(q^{\alpha+1}; q)_{x} (q^{\beta+1}; q)_{N-x}}{(q;q)_{x}(q;q)_{N-x}}q^{(N-x)(\alpha+1)},
\end{align*}
and
\begin{multline*}
\eta(n,\alpha,\beta,N;q) = \frac{ (q^{\alpha+\beta+2};q)_{n+N}}{(q;q)_{N-n}} q^{2\binom{n}{2}-2Nn} 
\\ \times
\frac{1-q^{\alpha+\beta+1}}{1-q^{2n+\alpha+\beta+1}} 
\frac{(q;q)_n (q^{\alpha+1};q)_n (q^{\beta+1};q)_n}{(q^{\alpha+\beta+1};q)_n}\; q^{n(\alpha+1)} .
\end{multline*}
The $q$-Hahn functions $\widehat{h}_{n}(x,\alpha,\beta, N;q)$ satisfy the orthogonality relation
\begin{align}
\label{Ortho-h}
\sum_{x=0}^{N} \widehat{h}_{n}(x,\alpha,\beta, N;q) \widehat{h}_{n'}(x,\alpha,\beta, N;q) = \delta_{nn'}.
\end{align}
The coefficients $C_{n_1,n_2}^{(\alpha_1,\alpha_2)}(y_1,y_2)$ satisfy the orthogonality relations
\begin{align}
\label{Ortho-C}
\begin{aligned}
\sum_{y_1, y_2} C_{n_1,n_2}^{(\alpha_1,\alpha_2)}(y_1,y_2)\, C_{n_1',n_2'}^{(\alpha_1,\alpha_2)}(y_1,y_2) &= \delta_{n_1 n_1'}\delta_{n_2 n_2'},
\\
\sum_{n_1,n_2} C_{n_1,n_2}^{(\alpha_1,\alpha_2)}(y_1,y_2)\, C_{n_1,n_2}^{(\alpha_1,\alpha_2)}(y_1',y_2') &= \delta_{y_1 y_1'}\delta_{y_2 y_2'}.
\end{aligned}
\end{align}
If we define the shift operators $T_{y_i}^{\pm} f(y_i)=f(y_i \pm 1)$, then using  \eqref{Actions} we see that the generators of each copy of $su_q(1,1)$ will be represented by the following operators
\begin{align}
\label{Other-Realization}
q^{A_0^{(i)}} \mapsto q^{y_i + (\alpha_i+1)/2},\quad A_{+}^{(i)} \mapsto \sqrt{\sigma_{y_i+1}^{(\alpha_i)}} T_{y_i}^{+},\quad A_{-}^{(i)} \mapsto \sqrt{\sigma_{y_i}^{(\alpha_i)}} T_{y_i}^{-}.
\end{align}
In particular, per \eqref{Intermediate-Casimir}, the Casimir $\Gamma^{[1;2]}$ is identified with the operator
\begin{multline*}
\Gamma^{[1;2]} \mapsto \left(\gamma(\alpha_1)q^{-(y_2+(\alpha_2+1)/2)} +\gamma(\alpha_2) q^{y_1+(\alpha_1+1)/2} -\left(\frac{q^{1/2} + q^{-1/2}}{(q^{1/2}-q^{-1/2})^2}\right) q^{y_1-y_2+(\alpha_1-\alpha_2)/2}\right)
\\
- q^{-(y_2+(\alpha_2-1)/2)}\sqrt{\sigma_{y_1+1}^{(\alpha_1)}\sigma_{y_2}^{(\alpha_2)}} T_{y_1}^{+}T_{y_2}^{-} - q^{-(y_2+(\alpha_2+1)/2)} \sqrt{\sigma_{y_1}^{(\alpha_1)}\sigma_{y_2+1}^{(\alpha_2)}}T_{y_1}^{-}T_{y_2}^{+}.
\end{multline*}
By construction, this operator acts in a diagonal fashion on the functions $C_{n_1,n_2}^{(\alpha_1,\alpha_2)}(y_1,y_2)$ with eigenvalues $\gamma(2n_1 + A_2 +1)$. 

\begin{Remark}
\label{Rem1}
Let us note that the functions $C_{n_1,n_2}^{(\alpha_1,\alpha_2)}(y_1,y_2)$ are bispectral. Indeed, if we consider $\bm{y}$ and $\bm{n}$ such that 
$$y_1+y_2=n_1+n_2=N,$$
then in view of \eqref{Clebsch-q-Hahn}, we can think of $C_{n_1,n_2}^{(\alpha_1,\alpha_2)}(y_1,y_2)$ as an orthonormal $q$-Hahn polynomial of the variable $y_1$, with index $n_1$, multiplied by the square root of the weight. In addition to the spectral equation in $y_1$ stemming from the realization \eqref{Other-Realization}, they obey also a recurrence relation in the index $n_1$. This property, which is well known (see \cite[Section 14.6]{Koekoek2010}), can be derived explicitly in the present context by considering the matrix element 
$$\langle q^{A_0^{(1)}} f_{n_1,n_2}^{(\alpha_1,\alpha_2)}, e_{y_1,y_2}^{(\alpha_1,\alpha_2)}\rangle = \langle f_{n_1,n_2}^{(\alpha_1,\alpha_2)}, q^{A_0^{(1)}}e_{y_1,y_2}^{(\alpha_1,\alpha_2)} \rangle = q^{y_1 +(\alpha_1+1)/2} C_{n_1,n_2}^{(\alpha_1,\alpha_2)}(y_1,y_2),$$ 
and by computing the action of $q^{A_0^{(1)}} $ on $f_{n_1,n_2}^{(\alpha_1,\alpha_2)}$.
\end{Remark}

\subsection{Nested Clebsch--Gordan coefficients and $q$-Hahn bases}
We now consider the general $d$-fold tensor product representation $W^{(\bm{\alpha})}$ defined in \eqref{d-Fold}, and its decomposition in irreducible components \eqref{Decompo}. Since there are multiplicities in the decomposition, the eigenvalue problem for the total Casimir operator $\Gamma^{[1;d]}$ is degenerate. In the following, we will construct two bases associated to the diagonalization of two different sequences of intermediate Casimir operators.
\subsubsection{First basis} 
Consider the following sequence of intermediate Casimir operators: $\{\Gamma^{[1;2]}, \Gamma^{[1;3]}, \ldots, \Gamma^{[1;d-1]}, \Gamma^{[1;d]}\}$. By construction, these operators commute with one another, and can thus be diagonalized simultaneously. Moreover, since the intermediate Casimir operators are self-adjoint on $W^{(\bm{\alpha})}$, the resulting basis will be orthogonal. Taking $\bm{n}=(n_1,\ldots, n_{d})$, we define the orthonormal basis $g_{\bm{n}}^{(\bm{\alpha})}$ of $W^{(\bm{\alpha})}$ by the eigenvalue equations
\begin{align}
\label{Eigen}
\begin{aligned}
&\Gamma^{[1;k]} g_{\bm{n}}^{(\bm{\alpha})} =\gamma(2 N_{k-1} + A_k + k - 1) g_{\bm{n}}^{(\bm{\alpha})},
\qquad k=2,\ldots, d,
\\
&q^{A_0^{[1;d]}}g_{\bm{n}}^{(\bm{\alpha})} = q^{N_d+(A_d+d)/2}g_{\bm{n}}^{(\bm{\alpha})},
\end{aligned}
\end{align}
where $N_{k}$ and $A_{k}$ are defined as $N_{k} = \sum_{i=1}^{k} n_i$ and $A_k = \sum_{i=1}^{k} \alpha_i$. We shall consider the expansion coefficients of the coupled basis $g_{\bm{n}}^{(\bm{\alpha})}$ in the direct product basis $e_{\mathbf{y}}^{(\bm{\alpha})}$. Upon iterating the Clebsch--Gordan decomposition \eqref{2-Clebsch}, one obtains the following result.
\begin{Proposition}
\label{Prop-1}
Let $\Psi_{\bm{n}}^{(\bm{\alpha})}(\bm{y})$ be defined by the expression
\begin{align}
\label{Psi}
\Psi_{\bm{n}}^{(\bm{\alpha})}(\bm{y}) = \delta_{N_d Y_d}\prod_{k=1}^{d-1} \widehat{h}_{n_{k}}(Y_{k}-N_{k-1}, 2 N_{k-1} + A_{k} + k-1, \alpha_{k+1}, Y_{k+1}-N_{k-1}; q),
\end{align}
where $Y_{k}= \sum_{i=1}^{k} y_{i}$, and where $\widehat{h}_{n}(x,\alpha,\beta, N;q)$ is given by \eqref{H-Hat}. These functions satisfy the orthogonality relations
\begin{align}
\label{Ortho-Psi}
\sum_{\bm{y}} \Psi_{\bm{n}}^{(\bm{\alpha})}(\bm{y}) \Psi_{\bm{n}'}^{(\bm{\alpha})}(\bm{y})=\delta_{\bm{n}\bm{n}'},
\qquad
\sum_{\bm{n}} \Psi_{\bm{n}}^{(\bm{\alpha})}(\bm{y}) \Psi_{\bm{n}}^{(\bm{\alpha})}(\bm{y}')=\delta_{\bm{y}\bm{y}'},
\end{align}
and arise in the expansion formulas
\begin{align}
\label{Exp-2}
g_{\bm{n}}^{(\bm{\alpha})} = \sum_{\bm{y}}\Psi_{\bm{n}}^{(\bm{\alpha})}(\bm{y})\, e_{\mathbf{y}}^{(\bm{\alpha})}, 
\qquad 
e_{\mathbf{y}}^{(\bm{\alpha})} = \sum_{\bm{n}}\Psi_{\bm{n}}^{(\bm{\alpha})}(\bm{y})\, g_{\bm{n}}^{(\bm{\alpha})}.
\end{align}
\end{Proposition}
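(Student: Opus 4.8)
The plan is to obtain the expansion \eqref{Exp-2} by iterating the two-fold Clebsch--Gordan decomposition \eqref{2-Clebsch} along the chain of embeddings $su_q(1,1) \subset su_q(1,1)^{\otimes 2} \subset \cdots \subset su_q(1,1)^{\otimes d}$, and then to read off $\Psi_{\bm n}^{(\bm\alpha)}(\bm y)$ as the resulting product of $q$-Hahn functions. The orthogonality relations \eqref{Ortho-Psi} will then follow either from the orthogonality \eqref{Ortho-C} of the building blocks, applied iteratively, or directly from the fact that $g_{\bm n}^{(\bm\alpha)}$ and $e_{\bm y}^{(\bm\alpha)}$ are two orthonormal bases of $W^{(\bm\alpha)}$ related by a real (hence orthogonal) transition matrix, which makes \eqref{Ortho-Psi} automatic once \eqref{Exp-2} is established.

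The key steps, in order: First I would set up the induction on $d$. The base case $d=2$ is exactly \eqref{2-Clebsch}--\eqref{Clebsch-q-Hahn}, since $\Psi_{n_1,n_2}^{(\alpha_1,\alpha_2)}(y_1,y_2)=\delta_{N_2 Y_2}\,\widehat h_{n_1}(Y_1-N_0,A_1+0,\alpha_2,Y_2-N_0;q)=\delta_{n_1+n_2,y_1+y_2}\,\widehat h_{n_1}(y_1,\alpha_1,\alpha_2,y_1+y_2;q)$ (using $N_0=0$, $A_1=\alpha_1$), which matches $C_{n_1,n_2}^{(\alpha_1,\alpha_2)}(y_1,y_2)$. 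For the inductive step, I would use the key structural observation, implicit in Remark~\ref{rem0} and in the defining eigenvalue equations \eqref{Eigen}: the subspace of $W^{(\bm\alpha)}$ on which $\Gamma^{[1;d-1]}$ takes the eigenvalue $\gamma(2N_{d-2}+A_{d-1}+d-2)$ and on which $\Gamma^{[1;d-2]},\ldots,\Gamma^{[1;2]}$ take the eigenvalues dictated by $n_1,\ldots,n_{d-3}$ and the $q^{A_0}$-eigenvalue is fixed, is (via the $d=2$ picture applied to the pair of representations indexed by $[1;d-1]$ and $[d;d]$) isomorphic as an $su_q(1,1)$-module to $V^{(2N_{d-2}+A_{d-1}+d-2)}\otimes V^{(\alpha_d)}$. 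Concretely: the coupled basis $g_{\bm n}^{(\bm\alpha)}$ for the chain of length $d$ is obtained from the coupled basis $g_{(n_1,\ldots,n_{d-1})}^{(\bm\alpha)}$ for the chain of length $d-1$ acting on the first $d-1$ factors, tensored with $V^{(\alpha_d)}$, by performing one further Clebsch--Gordan step that couples the "$[1;d-1]$" representation $V^{(2N_{d-2}+A_{d-1}+d-2)}$ with $V^{(\alpha_d)}$ to the "$[1;d]$" representation $V^{(2N_{d-1}+A_d+d-1)}$. That last step contributes precisely the factor $\widehat h_{n_{d-1}}(Y_{d-1}-N_{d-2},2N_{d-2}+A_{d-1}+d-2,\alpha_d,Y_d-N_{d-2};q)$ by \eqref{Clebsch-q-Hahn}, with the identification of the "$V^{(\alpha)}$-label" $y_{d-1}$ of the coupled vector $f$ with $Y_{d-1}-N_{d-2}$ coming from Remark~\ref{rem0} (the coupled vector in $V^{(2N_{d-2}+A_{d-1}+d-2)}$ sits at level $Y_{d-1}-N_{d-2}$ once the total $q^{A_0}$-content of the first $d-1$ factors is $Y_{d-1}$ and the "internal" shift is $N_{d-2}$). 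Combining this new factor with the inductive hypothesis for the first $d-1$ factors gives exactly \eqref{Psi}, where the constraint on the last $q$-Hahn argument $Y_d-N_{d-2}$ — i.e., that the inner coupled basis of $V^{(2N_{d-2}+A_{d-1}+d-2)}$ runs over levels $0,1,\ldots$ up to the bound set by $Y_d$ — encodes the summation range and the final $\delta_{N_d Y_d}$ Kronecker factor comes from the $q^{A_0^{[1;d]}}$ eigenvalue equation in \eqref{Eigen}.

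Once \eqref{Exp-2} is in hand, the orthogonality \eqref{Ortho-Psi}: the first relation says the columns of the transition matrix are orthonormal, which holds because $e_{\bm y}^{(\bm\alpha)}$ is orthonormal and $g_{\bm n}^{(\bm\alpha)}$ is orthonormal, so $\delta_{\bm n\bm n'}=\langle g_{\bm n}^{(\bm\alpha)},g_{\bm n'}^{(\bm\alpha)}\rangle=\sum_{\bm y}\Psi_{\bm n}^{(\bm\alpha)}(\bm y)\Psi_{\bm n'}^{(\bm\alpha)}(\bm y)$; the second is the analogous computation of $\langle e_{\bm y}^{(\bm\alpha)},e_{\bm y'}^{(\bm\alpha)}\rangle$, using that the matrix is square (for each fixed total level the index sets for $\bm n$ and $\bm y$ have the same cardinality) so that one-sided orthonormality forces the other. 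Alternatively one derives \eqref{Ortho-Psi} purely combinatorially by telescoping the one-variable relation \eqref{Ortho-h} through the product in \eqref{Psi}, summing over $y_1$ first (which collapses the $k=1$ factor), then over $y_2$, and so on; I would mention this as the self-contained route.

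The main obstacle I anticipate is bookkeeping rather than conceptual: one must carefully track how the parameters $(\alpha,\beta,N)$ and the variable $x$ of each successive $q$-Hahn factor are built from the partial sums $N_{k-1}$, $A_k$ and $Y_k$ — in particular verifying that the "$\beta$" parameter entering the $k$-th factor is $\alpha_{k+1}$ (the label of the single tensor factor being adjoined) while the "$\alpha$" parameter is the accumulated label $2N_{k-1}+A_k+k-1$ of the already-coupled block, and that the "truncation" $N$ of the $k$-th factor is $Y_{k+1}-N_{k-1}$, consistent with both the length-$N$ bound of the $q$-Hahn family and the nesting of levels coming from Remark~\ref{rem0}. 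One must also confirm that the coproduct twist $q^{A_0}\otimes A_\pm$ in \eqref{Gen-Rep}, which makes the realization \eqref{Gen-Rep} for the block $[1;d-1]$ match the standard one-factor action \eqref{Actions} after restriction to an irreducible component, does not introduce spurious $q$-powers into the coefficients; this is ensured precisely because the Clebsch--Gordan coefficients \eqref{Clebsch-q-Hahn} were computed with that same twisted coproduct in the $d=2$ case. Checking that all these substitutions are mutually consistent and that the nested summation ranges telescope correctly is the crux of the argument.
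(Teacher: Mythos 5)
Your proposal is correct and follows essentially the same route as the paper: iterate the two-fold Clebsch--Gordan decomposition \eqref{2-Clebsch} one tensor factor at a time, use Remark~\ref{rem0} to identify each coupled vector with a basis vector of an irreducible module (which pins the intermediate labels to $\widehat{n}_j=Y_{j+1}-N_j$ via \eqref{Clebsch-q-Hahn}), and deduce \eqref{Ortho-Psi} from the orthogonality of the transition matrix between the two orthonormal bases. Packaging the iteration as an induction on $d$ rather than an explicit unrolling is a purely presentational difference.
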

\begin{proof}
One starts from the direct product basis $e_{\mathbf{y}}^{(\bm{\alpha})}$ of $W^{(\bm{\alpha})}$. One can diagonalize the intermediate Casimir operator $\Gamma^{[1;2]}$ using the expansion \eqref{2-Clebsch}. This leads to
\begin{align*}
e_{\mathbf{y}}^{(\bm{\alpha})} = \sum_{n_1, \widehat{n}_1} C_{n_1,\widehat{n}_1}^{(\alpha_1,\alpha_2)}(y_1, y_2) \; f_{n_1, \widehat{n}_1}^{(\alpha_1, \alpha_2)} \otimes e_{y_3}^{(\alpha_3)}\otimes \cdots e_{y_{d}}^{(\alpha_d)}.
\end{align*}
Upon using Remark \ref{rem0} to identify $f_{n_1, \widehat{n}_1}^{(\alpha_1, \alpha_2)}$ with $e_{\widehat{n}_1}^{(2n_1 + \alpha_1 + \alpha_2 +1)}\in V^{(\alpha_1)}\otimes V^{(\alpha_2)}$, one can use \eqref{2-Clebsch} on $f_{n_1, \widehat{n}_1}^{(\alpha_1, \alpha_2)} \otimes e_{y_3}^{(\alpha_3)}$ to diagonalize $\Gamma^{[1;2]}$ and  $\Gamma^{[1;3]}$ simultaneously. One then obtains
\begin{multline*}
e_{\mathbf{y}}^{(\bm{\alpha})} = \sum_{n_1, \widehat{n}_1,n_2, \widehat{n}_2} 
C_{n_1,\widehat{n}_1}^{(\alpha_1,\alpha_2)}(y_1, y_2) \; C_{n_2,\widehat{n}_2}^{(2n_1+\alpha_1+\alpha_2+1,\alpha_3)}(\widehat{n}_1, y_3)
\\\times
f_{n_2, \widehat{n}_2}^{(2n_1 +  \alpha_1 +\alpha_2 +1,\alpha_3)}\otimes e_{y_4}^{(\alpha_4)}\otimes \cdots \otimes e_{y_{d}}^{(\alpha_d)}.
\end{multline*}
Using Remark \ref{rem0} again to identify $f_{n_2, \widehat{n}_2}^{(2n_1 +  \alpha_1 +\alpha_2 +1,\alpha_3)}$ with the vector $e_{\widehat{n}_2}^{(2n_1 + 2n_2 + \alpha_1 + \alpha_2 + \alpha_3 +2)} \in  V^{(\alpha_1)}\otimes V^{(\alpha_2)} \otimes V^{(\alpha_3)}$, one can use \eqref{2-Clebsch} on $f_{n_2, \widehat{n}_2}^{(2n_1 + \alpha_1 +\alpha_2 +1 , \alpha_3)} \otimes e_{y_4}^{(\alpha_4)}$ to diagonalize $\Gamma^{[1;2]}$, $\Gamma^{[1;3]}$ and $\Gamma^{[1;4]}$ simultaneously. This leads to
\begin{multline*}
 e_{\mathbf{y}}^{(\bm{\alpha})}  = \sum_{n_1, \hat{n}_1}  \sum_{n_2, \hat{n}_2}  \sum_{n_3, \hat{n}_3} 
C_{n_1,\hat{n}_1}^{(\alpha_1,\alpha_2)}(y_1, y_2) C_{n_2,\hat{n}_2}^{(2n_1+\alpha_1+\alpha_2+1,\alpha_3)}(\hat{n}_1, y_3)
\\
\times  C_{n_3,\hat{n}_3}^{(2n_1+2n_2+\alpha_1+\alpha_2+\alpha_3+2,\alpha_4)}(\hat{n}_2, y_4)\;\;
f_{n_3,\hat{n}_3}^{(2n_1+2n_2+\alpha_1+\alpha_2+\alpha_3+2,\alpha_4)}
\otimes e_{y_5}^{(\alpha_5)}\otimes \cdots \otimes e_{y_{d}}^{(\alpha_d)},
\end{multline*}
and so on. Then one can use \eqref{Clebsch-q-Hahn} to deduce that the terms in the sum above vanish unless
\begin{align*}
\widehat{n}_j = Y_{j+1} - N_{j},\qquad j = 1, 2, \ldots, d-1.
\end{align*}
Upon substituting the above condition in the expansion of $e_{\mathbf{y}}^{(\bm{\alpha})}$ we obtain the second formula in \eqref{Exp-2} where the coefficients $\Psi_{\bm{n}}^{(\bm{\alpha})}(\bm{y})$ are given in \eqref{Psi}. Since both bases $\{e_{\bm{y}}^{(\bm{\alpha})}\}$ and $\{g_{\bm{n}}^{(\bm{\alpha})}\}$ are orthonormal, \eqref{Ortho-Psi} and the first formula in \eqref{Exp-2} follow from the fact that the matrix $(\Psi_{\bm{n}}^{(\bm{\alpha})}(\bm{y}))_{\bm{n}, \bm{y}}$ is orthogonal.
\end{proof}
The coefficients $\Psi_{\bm{n}}^{(\bm{\alpha})}(\bm{y})$ can be viewed as nested Clebsch--Gordan coefficients for the positive discrete series of irreducible representations of $su_q(1,1)$. A different approach to obtain multivariate $q$-Hahn polynomials was outlined by Rosengren in \cite{Rosengren2001}. Note that 
$$\Psi_{\bm{n}}^{(\bm{\alpha})}(\bm{y}) = \langle g_{\bm{n}}^{(\bm{\alpha})},  e_{\mathbf{y}}^{(\bm{\alpha})}\rangle .$$
Using \eqref{Other-Realization}, we can represent the action of $\Gamma^{[1;k]}$ as a difference operator in the variables $\bm{y}$. Then, the equation
$$ \langle \Gamma^{[1;k]} g_{\bm{n}}^{(\bm{\alpha})},  e_{\mathbf{y}}^{(\bm{\alpha})}\rangle = \langle g_{\bm{n}}^{(\bm{\alpha})}, \Gamma^{[1;k]} e_{\mathbf{y}}^{(\bm{\alpha})}\rangle$$
combined with \eqref{Eigen} shows that $\Psi_{\bm{n}}^{(\bm{\alpha})}(\bm{y})$ are eigenfunctions of the difference operators  $\Gamma^{[1;k]}$ with eigenvalues $\gamma(2 N_{k-1} + A_k + k - 1)$. The results in \cite{Iliev2011} imply that the functions $\Psi_{\bm{n}}^{(\bm{\alpha})}(\bm{y})$ are also eigenfunctions of commuting operators acting on the variables $\bm{n}$, and therefore are bispectral. The natural extension of the arguments in Remark \ref{Rem1} provides a Lie-interpretation of the corresponding bispectral algebras of partial difference operators.

\subsubsection{A family of multivariate $q$-Hahn polynomials}
The orthonormal functions $\Psi_{\bm{n}}^{(\bm{\alpha})}(\bm{y})$ can be expressed in terms of the multivariate $q$-Hahn polynomials introduced by Gasper and Rahman in \cite{Gasper&Rahman_2007}. Indeed, one can write
\begin{align}
\label{Psi-Pols}
\Psi_{\bm{n}}^{(\bm{\alpha})}(\bm{y}) = \delta_{N_d Y_d}
\sqrt{\frac{\rho^{(\bm{\alpha})}(\bm{y})}{\Lambda_{\bm{n}}^{(\bm{\alpha})}}}\; H_{\bm{n}}^{(\bm{\alpha})}(\bm{y}),
\end{align}
with $\rho^{(\bm{\alpha})}(\bm{y})$ given by
\begin{align}
\label{Rho-Weight}
\rho^{(\bm{\alpha})}(\bm{y}) = \prod_{k=1}^{d}\frac{(q^{\alpha_k+1}; q)_{y_k}}{(q;q)_{y_k}} q^{y_k(A_{k-1} + k - 1)}.
\end{align}
The normalization factor $\Lambda_{\bm{n}}^{(\bm{\alpha})}$ has the expression
\begin{multline*}
\Lambda_{\bm{n}}^{(\bm{\alpha})} = \frac{(q^{A_d + d};q)_{n_d + 2N_{d-1}}}{(q;q)_{n_d}} q^{2\binom{N_{d-1}}{2} - 2 N_{d}N_{d-1}}
\\
\times 
\prod_{k=1}^{d-1}
\frac{1-q^{A_{k+1}+k}}{1-q^{2 N_k + A_{k+1} + k}} \frac{(q;q)_{n_k} (q^{A_k + k};q)_{n_k + 2 N_{k-1}}(q^{\alpha_{k+1}+1};q)_{n_k}}{(q^{A_{k+1} + k};q)_{n_k + 2 N_{k-1}}} q^{n_k(2N_{k-1}+A_k+k)},
\end{multline*}
and $H_{\bm{n}}^{(\bm{\alpha})}(\bm{y})$ are the Gasper--Rahman multivariate $q$-Hahn polynomials defined as
\begin{align}
\label{MV-q-Hahn}
H_{\bm{n}}^{(\bm{\alpha})}(\bm{y}) = \prod_{k=1}^{d-1}h_{n_k}(Y_{k}-N_{k-1}, 2 N_{k-1} + A_{k} + k -1, \alpha_{k+1}, Y_{k+1} - N_{k-1};q),
\end{align}
where $h_{n}(x,\alpha,\beta,N;q)$ are the $q$-Hahn polynomials given in \eqref{q-Hahn}. These multivariate $q$-Hahn polynomials are a direct $q$-deformation of Karlin \& McGregor's multivariate Hahn polynomials \cite{Karlin1975}. Upon fixing $M\in \mathbb{N}$ and taking $\bm{n}$ and $\bm{n}'$ such that $N_d=N_d'=M$, these polynomials satisfy the orthogonality relation
\begin{align}
\label{Ortho-H}
\sum_{\substack{\bm{y}\\ Y_d= M}} \rho^{(\bm{\alpha})}(\bm{y})\; H_{\bm{n}}^{(\bm{\alpha})}(\bm{y}) \;H_{\bm{n}'}^{(\bm{\alpha})}(\bm{y}) = \Lambda_{\bm{n}}^{(\bm{\alpha})}\delta_{\bm{n}\bm{n}'}.
\end{align}
\begin{Remark}
Note that when dealing with the polynomials \eqref{MV-q-Hahn}, one usually fixes $Y_{d} = N_{d} = M$ and takes $n_{d} = M - N_{d-1}$. The resulting polynomials have parameters $\alpha_1,\ldots, \alpha_{d}$ and $M$, and degree indices $n_1,\ldots,n_{d-1}$. Alternatively, $H_{\bm{n}}^{(\bm{\alpha})}(\bm{y})$ can be described as orthogonal polynomials of total degree $N_{d-1}$ in the variables $q^{-Y_1},q^{-Y_2}, \ldots, q^{-Y_{d-1}}$.
\end{Remark}
\subsubsection{Second basis} 
Consider the sequence of operators $\{\Gamma^{[2;3]}, \Gamma^{[2;4]}, \ldots, \Gamma^{[2;d]}, \Gamma^{[1;d]}\}$. These operators are self-adjoint on $W^{(\bm{\alpha})}$, and they commute with one another. Consequently, one can construct an orthonormal basis that simultaneously diagonalizes them. Taking $\bm{m}=(m_1,\ldots, m_{d})$, the orthonormal basis $u_{\bm{m}}^{(\bm{\alpha})}$ of $W^{(\bm{\alpha})}$ is defined by the eigenvalue equations
\begin{align}
\label{eigen-2}
\begin{aligned}
&\Gamma^{[2;k]} u_{\bm{m}}^{(\bm{\alpha})} =\gamma(2 M_{k-2} + \widetilde{A}_k + k - 2) u_{\bm{m}}^{(\bm{\alpha})},
\qquad
k=3,\ldots, d,
\\
&\Gamma^{[1;d]} u_{\bm{m}}^{(\bm{\alpha})} =\gamma(2 M_{d-1} + A_d + d - 1) u_{\bm{m}}^{(\bm{\alpha})}, \qquad
q^{A_0^{[1;d]}}u_{\bm{m}}^{(\bm{\alpha})} = q^{M_d+(A_d+d)/2} \, u_{\bm{m}}^{(\bm{\alpha})},
\end{aligned}
\end{align}
where  $M_k = \sum_{i=1}^{k} m_i$ and where $\widetilde{A}_k = \sum_{i=2}^{k} \alpha_i$. Once again we consider the expansion coefficients of the coupled basis $u_{\bm{m}}^{(\bm{\alpha})}$ in the direct product basis $e_{\bm{y}}^{(\bm{\alpha})}$. One has the following result.
\begin{Proposition}
Let $\Xi_{\bm{m}}^{(\bm{\alpha})}(\bm{y})$ be defined by the expression
\begin{multline}
\label{Xi-Def}
\Xi_{\bm{m}}^{(\bm{\alpha})}(\bm{y}) =\delta_{M_d Y_d} \prod_{k=1}^{d-2} \widehat{h}_{m_k}(\widetilde{Y}_{k+1}-M_{k-1}, 2M_{k-1}+\widetilde{A}_{k+1}+k-1, \alpha_{k+2}, \widetilde{Y}_{k+2}-M_{k-1};q)
\\
\times \widehat{h}_{m_{d-1}}(y_1, \alpha_1, 2 M_{d-2} + \widetilde{A}_{d} + d -2, Y_{d}- M_{d-2};q),
\end{multline}
where $\widetilde{Y}_{k} = \sum_{i=2}^{k} y_i$ and where $\widehat{h}_{n}(x,\alpha,\beta,N;q)$ is given by \eqref{H-Hat}. The function $\Xi_{\bm{m}}^{(\bm{\alpha})}(\bm{y})$ satisfy the orthogonality relations
\begin{align}
\label{Ortho-Xi}
\sum_{\bm{y}} \Xi_{\bm{m}}^{(\bm{\alpha})}(\bm{y}) \Xi_{\bm{m}'}^{(\bm{\alpha})}(\bm{y})=\delta_{\bm{m}\bm{m}'},
\qquad
\sum_{\bm{m}} \Xi_{\bm{m}}^{(\bm{\alpha})}(\bm{y}) \Xi_{\bm{m}}^{(\bm{\alpha})}(\bm{y}')=\delta_{\bm{y}\bm{y}'},
\end{align}
and arise in the expansion formulas
\begin{align}
\label{Exp-Xi}
u_{\bm{m}}^{(\bm{\alpha})} = \sum_{\bm{y}}\Xi_{\bm{m}}^{(\bm{\alpha})}(\bm{y})\, e_{\mathbf{y}}^{(\bm{\alpha})}, 
\qquad 
e_{\mathbf{y}}^{(\bm{\alpha})} = \sum_{\bm{m}}\Xi_{\bm{m}}^{(\bm{\alpha})}(\bm{y})\, u_{\bm{m}}^{(\bm{\alpha})}.
\end{align}
\end{Proposition}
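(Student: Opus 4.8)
The plan is to follow the proof of Proposition~\ref{Prop-1} essentially verbatim, changing only the order in which the elementary Clebsch--Gordan decomposition \eqref{2-Clebsch} is iterated on the factors of $W^{(\bm{\alpha})}$. For the first basis one couples $V^{(\alpha_1)}$ with $V^{(\alpha_2)}$, then the outcome with $V^{(\alpha_3)}$, and so on. Here one instead works first inside $V^{(\alpha_2)}\otimes\cdots\otimes V^{(\alpha_d)}$: couple $V^{(\alpha_2)}$ with $V^{(\alpha_3)}$ to diagonalize $\Gamma^{[2;3]}$, then the resulting irreducible component with $V^{(\alpha_4)}$ to diagonalize $\Gamma^{[2;4]}$, and so on up to $\Gamma^{[2;d]}$; only at the very last step does one couple $V^{(\alpha_1)}$ with the remaining irreducible component to diagonalize $\Gamma^{[1;d]}$. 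This nesting is legitimate because \eqref{Gen-Rep} shows that $A_0^{[2;k+1]},A_{\pm}^{[2;k+1]}$ are obtained from $A_0^{[2;k]},A_{\pm}^{[2;k]}$ and $A_0^{(k+1)},A_{\pm}^{(k+1)}$ through the coproduct formula, and likewise that $A_0^{[1;d]},A_{\pm}^{[1;d]}$ are obtained from $A_0^{(1)},A_{\pm}^{(1)}$ and $A_0^{[2;d]},A_{\pm}^{[2;d]}$; hence at each stage the two-factor Clebsch--Gordan coefficients are exactly the $q$-Hahn functions of \eqref{Clebsch-q-Hahn}, and Remark~\ref{rem0} applies at each stage to re-expand the coupled vector $f$ as a basis vector $e$ of the appropriate irreducible $su_q(1,1)$-module.

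Carrying this out, I would expand $e_{\bm{y}}^{(\bm{\alpha})}$ by applying \eqref{2-Clebsch} to the $(2,3)$ factors, use Remark~\ref{rem0} to replace $f_{m_1,\widehat{m}_1}^{(\alpha_2,\alpha_3)}$ by $e_{\widehat{m}_1}^{(2m_1+\alpha_2+\alpha_3+1)}$, apply \eqref{2-Clebsch} again to couple this with $e_{y_4}^{(\alpha_4)}$, and iterate $d-2$ times. Tracking the irreducible labels as in \eqref{Decompo}, the component obtained after coupling $V^{(\alpha_2)},\ldots,V^{(\alpha_{k+2})}$ with coupling indices $m_1,\ldots,m_k$ is $V^{(2M_k+\widetilde{A}_{k+2}+k)}$, consistent with the eigenvalues recorded in \eqref{eigen-2}. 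At step $k$, equation \eqref{Clebsch-q-Hahn} contributes the factor $\widehat{h}_{m_k}(\widehat{m}_{k-1},\,2M_{k-1}+\widetilde{A}_{k+1}+k-1,\,\alpha_{k+2},\,\widehat{m}_{k-1}+y_{k+2};q)$ together with the constraint $m_k+\widehat{m}_k=\widehat{m}_{k-1}+y_{k+2}$ (with the convention $\widehat{m}_0=y_2$), so that $\widehat{m}_j=\widetilde{Y}_{j+2}-M_j$ for $j=1,\ldots,d-2$; substituting $\widehat{m}_{k-1}=\widetilde{Y}_{k+1}-M_{k-1}$ reproduces exactly the first $d-2$ factors of \eqref{Xi-Def}. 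The last step applies \eqref{2-Clebsch}--\eqref{Clebsch-q-Hahn} to $V^{(\alpha_1)}\otimes V^{(2M_{d-2}+\widetilde{A}_{d}+d-2)}$, where the accumulated label now enters as the second argument: this yields the factor $\widehat{h}_{m_{d-1}}(y_1,\,\alpha_1,\,2M_{d-2}+\widetilde{A}_{d}+d-2,\,Y_d-M_{d-2};q)$, diagonalizes $\Gamma^{[1;d]}$ with eigenvalue $\gamma(2M_{d-1}+A_d+d-1)$, and produces the constraint $\widehat{m}_{d-1}=Y_d-M_{d-1}$; identifying $\widehat{m}_{d-1}$ with the index $m_d$ of the final coupled vector gives $M_d=Y_d$, i.e.\ the prefactor $\delta_{M_d Y_d}$. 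Substituting all of these constraints collapses the multiple sum to a single sum over $\bm{m}$, which is precisely the second formula of \eqref{Exp-Xi} with $\Xi_{\bm{m}}^{(\bm{\alpha})}(\bm{y})$ as in \eqref{Xi-Def}.

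Finally, since $\{e_{\bm{y}}^{(\bm{\alpha})}\}$ and $\{u_{\bm{m}}^{(\bm{\alpha})}\}$ are orthonormal bases of $W^{(\bm{\alpha})}$, the transition matrix $(\Xi_{\bm{m}}^{(\bm{\alpha})}(\bm{y}))_{\bm{m},\bm{y}}$ is orthogonal, and this yields both relations in \eqref{Ortho-Xi} together with the first expansion in \eqref{Exp-Xi}. The step I expect to be most delicate is the index bookkeeping in the middle paragraph: one must keep the two kinds of partial sums $Y_k=\sum_{i=1}^k y_i$ and $\widetilde{Y}_k=\sum_{i=2}^k y_i$ straight (and likewise $A_k$ versus $\widetilde{A}_k$) through the iteration, and in particular handle correctly the asymmetry at the final coupling, where $V^{(\alpha_1)}$ enters on the left so that $y_1$ becomes the polynomial variable while the large accumulated label $2M_{d-2}+\widetilde{A}_{d}+d-2$ plays the role of the parameter $\beta$ rather than $\alpha$ in \eqref{H-Hat}. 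Everything else is a direct transcription of the argument proving Proposition~\ref{Prop-1}.
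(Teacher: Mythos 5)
Your proposal is correct and follows essentially the same route as the paper's proof, which likewise iterates the two-fold Clebsch--Gordan decomposition \eqref{2-Clebsch} starting with the $(2,3)$ factors, proceeds up to $\Gamma^{[2;d]}$, and couples $V^{(\alpha_1)}$ last to diagonalize $\Gamma^{[1;d]}$, invoking Remark~\ref{rem0} and \eqref{Clebsch-q-Hahn} at each stage. Your index bookkeeping ($\widehat{m}_j=\widetilde{Y}_{j+2}-M_j$ and the final constraint yielding $\delta_{M_d Y_d}$) is accurate and in fact more explicit than the paper's, which leaves these details to the reader by analogy with Proposition~\ref{Prop-1}.
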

\begin{proof}
The proof is along the same lines as that of Proposition \ref{Prop-1}. Starting from the direct product basis $e_{\bm{y}}^{(\bm{\alpha})}$, one diagonalizes the operator $\Gamma^{[2;3]}$ by using the expansion \eqref{2-Clebsch}. This leads to
\begin{align*}
e_{\bm{y}}^{(\bm{\alpha})} = \sum_{m_1, \widehat{m}_1} C_{m_1, \widehat{m}_1}^{(\alpha_2,\alpha_3)}(y_2, y_3)\; e_{y_1}^{(\alpha_1)}\otimes f_{m_1, \widehat{m}_1}^{(\alpha_2,\alpha_3)} \otimes e_{y_4}^{(\alpha_4)}\otimes \cdots \otimes e_{y_{d}}^{(\alpha_d)}.
\end{align*}
One can use Remark \ref{rem0} to identify $f_{m_1, \widehat{m}_1}^{(\alpha_2,\alpha_3)}$ with $e_{\widehat{m}_1}^{(2m_1 + \alpha_2 + \alpha_3 +1)} \in V^{(\alpha_2)}\otimes V^{(\alpha_3)}$ and use \eqref{2-Clebsch} on the vector $f_{m_1, \widehat{m}_1}^{(\alpha_2,\alpha_3)} \otimes e_{y_4}^{(\alpha_4)}$. Repeating this procedure until $\Gamma^{[2;d]}$ is diagonalized, one finally diagonalizes the total Casimir operator $\Gamma^{[1;d]}$ by applying \eqref{2-Clebsch} one last time. Using the explicit expression \eqref{Clebsch-q-Hahn} then yields \eqref{Xi-Def}.
\end{proof}
The coefficients $\Xi_{\bm{m}}^{(\bm{\alpha})}(\bm{y})$ can also be viewed as nested Clebsch--Gordan coefficients and satisfy bispectral equations.
\subsubsection{Another family of multivariate $q$-Hahn polynomials}
The orthogonal functions $\Xi_{\bm{m}}^{(\bm{\alpha})}(\bm{y})$ can also be written in terms of a family multivariate orthogonal polynomials of $q$-Hahn type. One has indeed
\begin{align}
\label{Xi-Pols}
\Xi_{\bm{m}}^{(\bm{\alpha})}(\bm{y}) = \delta_{M_{d} Y_{d}} \sqrt{\frac{\rho^{(\bm{\alpha})}(\bm{y})}{\Omega_{\bm{m}}^{(\bm{\alpha})}}}\;
G_{\bm{m}}^{(\bm{\alpha})}(\bm{y}),
\end{align}
where $\rho^{(\bm{\alpha})}(\bm{y})$ is given by \eqref{Rho-Weight}. The normalization factor  $\Omega_{\bm{m}}^{(\bm{\alpha})}$ is of the form
\begin{multline*}
\Omega_{\bm{m}}^{(\bm{\alpha})} = \frac{(q^{A_d+d};q)_{m_d + 2 M_{d-1}}q^{2\binom{M_{d-1}}{2}-2M_{d}M_{d-1}}}{(q;q)_{m_{d}}}
\\
\times \prod_{k=1}^{d-2} \frac{1-q^{\widetilde{A}_{k+2} + k }}{1-q^{2 M_k + \widetilde{A}_{k+2} + k}} 
\frac{(q;q)_{m_k} (q^{\widetilde{A}_{k+1} + k};q)_{m_k + 2 M_{k-1}} (q^{\alpha_{k+2}+ 1};q)_{m_k}}{(q^{\widetilde{A}_{k+2}+k};q)_{m_k + 2 M_{k-1}}} q^{m_k (2 M_{k-1} + \widetilde{A}_{k+1}+k)}
\\
\times \frac{1-q^{A_{d}+d-1}}{1-q^{2M_{d-1}+A_d+d-1}} 
\frac{(q;q)_{m_{d-1}} (q^{\widetilde{A}_d + d -1};q)_{m_{d-1} + 2M_{d-2}} (q^{\alpha_1+1};q)_{m_{d-1}} }{(q^{A_d + d -1};q)_{m_{d-1}+2M_{d-2}}}
q^{M_{d-1}(\alpha_1 + 1)},
\end{multline*}
and the polynomials $G_{\bm{m}}^{(\bm{\alpha})}(\bm{y})$ have the expression
\begin{multline}
\label{G-Pols}
 G_{\bm{m}}^{(\bm{\alpha})}(\bm{y})=
  \prod_{k=1}^{d-2} h_{m_k}(\widetilde{Y}_{k+1}-M_{k-1}, 2M_{k-1}+\widetilde{A}_{k+1}+k-1,\alpha_{k+2},\widetilde{Y}_{k+2} - M_{k-1};q)
\\
\times q^{-y_1 M_{d-2}} h_{m_{d-1}}(y_1,\alpha_1, 2M_{d-2}+\widetilde{A}_{d}+d-2,Y_{d} - M_{d-2};q),
\end{multline}
where $h_n(x,\alpha,\beta,N;q)$ are the $q$-Hahn polynomials \eqref{q-Hahn}. These polynomials are orthogonal with respect to the same measure as the Gasper--Rahman $q$-Hahn polynomials $H_{\bm{n}}^{(\bm{\alpha})}(\bm{y})$ given by \eqref{MV-q-Hahn}. Upon fixing $L\in \mathbb{N}$ and taking $\bm{m}$ and $\bm{m}'$ such that $M_d=M_d'=L$, the orthogonality relation for the polynomials $G_{\bm{m}}^{(\bm{\alpha})}(\bm{y})$ reads
\begin{align}
\label{Ortho-G}
\sum_{\substack{\bm{y}\\ Y_d = L}} \rho^{(\bm{\alpha})}(\bm{y})\; G_{\bm{m}}^{(\bm{\alpha})}(\bm{y}) \;G_{\bm{m}'}^{(\bm{\alpha})}(\bm{y}) = \Omega_{\bm{m}}^{(\bm{\alpha})}\delta_{\bm{m}\bm{m}'}.
\end{align}
\begin{Remark}
Once again, one can take $m_{d} = L - Y_{d-1}$. The resulting polynomials have parameters $\alpha_1,\ldots, \alpha_{d}$ and $L$, and degree indices $m_1,\ldots,m_{d-1}$. Alternatively, $G_{\bm{m}}^{(\bm{\alpha})}(\bm{y})$ can be described as orthogonal polynomials of total degree $M_{d-1}$ in the variables $q^{-Y_1},q^{-Y_2}, \ldots, q^{-Y_{d-1}}$.
\end{Remark}
Let us quickly recap the results obtained so far. We have used nested Clebsch--Gordan coefficients for multifold tensor product representations of $su_q(1,1)$ to construct two bases of multivariate orthogonal functions $\Psi_{\bm{n}}^{(\bm{\alpha})}(\bm{y})$ and $\Xi_{\bm{m}}^{(\bm{\alpha})}(\bm{y})$ that diagonalize two commutative subalgebras of intermediate Casimir operators. From $\Psi_{\bm{n}}^{(\bm{\alpha})}(\bm{y})$ and $\Xi_{\bm{m}}^{(\bm{\alpha})}(\bm{y})$, we then constructed two families of multivariate $q$-Hahn polynomials $H_{\bm{n}}^{(\bm{\alpha})}(\bm{y})$ and $G_{\bm{m}}^{(\bm{\alpha})}(\bm{y})$ that are orthogonal with respect to the same measure $\rho^{(\bm{\alpha})}(\bm{y})$ given in \eqref{Rho-Weight}.
\subsection{$q$-Jacobi bases}
The orthogonal functions $\Psi_{\bm{n}}^{(\bm{\alpha})}(\bm{y})$ and $\Xi_{\bm{m}}^{(\bm{\alpha})}(\bm{y})$ defined in \eqref{Psi} and \eqref{Xi-Def} are both non-zero if the condition $Y_{d} = N_{d} = M_{d} $ is satisfied. The corresponding families of multivariate $q$-Hahn polynomials $H_{\bm{n}}^{(\bm{\alpha})}(\bm{y})$ and $G_{\bm{m}}^{(\bm{\alpha})}(\bm{y})$ consequently satisfy the finite orthogonality relations \eqref{Ortho-H} and \eqref{Ortho-G}. It is therefore meaningful to consider limits of these families of functions and polynomials as $Y_d = N_d = M_d$ goes to infinity. We shall consider the limits of the functions $\Psi_{\bm{n}}^{(\bm{\alpha})}(\bm{y})$ and $\Xi_{\bm{m}}^{(\bm{\alpha})}(\bm{y})$ and extract orthogonal polynomials from each of those limits. These results will prove useful later.

Let $p_{n}(x,\alpha,\beta;q)$ be the little $q$-Jacobi polynomials \cite{Koekoek2010}
\begin{align}
\label{q-Jacobi}
p_{n}(x,\alpha,\beta;q) = (q^{\alpha+1};q)_{n} 
\;{}_2\phi_1\left(\genfrac{}{}{0pt}{}{q^{-n}, q^{n+\alpha+\beta+1}}{q^{\alpha+1}}; q, q^{x+1}\right).
\end{align}
Define the functions $\widehat{p}_{n}(x,\alpha,\beta;q)$ as follows:
\begin{align*}
\widehat{p}_{n}(x,\alpha,\beta;q) = 
\sqrt{\frac{\mu(x,\alpha,\beta;q)}{\kappa(n,\alpha,\beta;q)}}\;
p_n(x,\alpha,\beta;q),
\end{align*}
where $\mu(x,\alpha,\beta;q)$ and $\kappa(n,\alpha,\beta;q)$ are given by
\begin{align*}
\mu(x,\alpha,\beta;q) &= 
\frac{(q^{\alpha+1};q)_{\infty}}{(q^{\alpha + \beta +2};q)_{\infty}} 
\frac{(q^{\beta+1};q)_{x}}{(q;q)_x}q^{x(\alpha + 1)},
\\
\kappa(n,\alpha,\beta;q) &=
\frac{1-q^{\alpha + \beta + 1}}{1-q^{2n + \alpha + \beta + 1}}
\frac{(q;q)_{n} (q^{\alpha + 1};q)_{n} (q^{\beta+1};q)_{n}}{(q^{\alpha + \beta +1};q)_{n}} q^{n(\alpha+1)}.
\end{align*}
The functions  $\widehat{p}_{n}(x,\alpha,\beta;q)$ satisfy the orthogonality relation \cite{Koekoek2010}
\begin{align}
\label{Ortho-P}
\sum_{x \geq 0} \widehat{p}_{n}(x,\alpha,\beta;q) \; \widehat{p}_{m}(x,\alpha,\beta;q) = \delta_{nm}.
\end{align}
\subsubsection{A first basis of $q$-Jacobi functions} Let us first consider the limit of the functions $\Psi_{\bm{n}}^{(\bm{\alpha})}(\bm{y})$ as $Y_{d} = N_{d}$ goes to infinity. We find the following result.
\begin{Proposition}
Let $\bm{s} = (s_1,\ldots,s_{d-1})$,  $\bm{x} = (x_1,\ldots,x_{d-1})$ and $\bm{\alpha} = (\alpha_1,\ldots, \alpha_{d})$. Furthermore, let $\mathcal{J}_{\bm{s}}^{(\bm{\alpha})}(\bm{x})$ be the functions defined as 
\begin{align}
\label{J-Def}
\mathcal{J}_{\bm{s}}^{(\bm{\alpha})}(\bm{x}) = 
\prod_{k=1}^{d-1} 
(-1)^{s_k}\widehat{p}_{s_k}(x_k, 2 S_{k-1} + A_k + k -1, \alpha_{k+1} ; q),
\end{align}
where $S_{k} = \sum_{i=1}^{k} s_i$ and $A_k = \sum_{i=1}^{k} \alpha_i$. Upon taking $\widetilde{\bm{s}}_{L}=(s_1,s_2,\ldots, s_{d-1}, L - S_{d-1})$ and $\widetilde{\bm{x}}_{L} = (L -X_{d-1}, x_1, \ldots, x_{d-1})$, one has
\begin{align*}
\lim_{L\rightarrow \infty}  \Psi_{\widetilde{\bm{s}}_{L}}^{(\bm{\alpha})}(\widetilde{\bm{x}}_{L}) = 
\mathcal{J}_{\bm{s}}^{(\bm{\alpha})}(\bm{x}).
\end{align*}
The functions $\mathcal{J}_{\bm{s}}^{(\bm{\alpha})}(\bm{x})$ obey the orthogonality relation
\begin{align}
\label{Ortho-J}
\sum_{\bm{x}} \mathcal{J}_{\bm{s}}^{(\bm{\alpha})}(\bm{x})\; \mathcal{J}_{\bm{s}'}^{(\bm{\alpha})}(\bm{x}) = \delta_{\bm{s}\bm{s}'},
\end{align}
where the multi-index $\bm{x}$ runs over multi-indices of non-negative integers.
\end{Proposition}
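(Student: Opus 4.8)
The plan is to prove the limit formula one Hahn factor at a time, and then to establish the orthogonality relation \eqref{Ortho-J} directly from the one-variable relation \eqref{Ortho-P} rather than by interchanging a limit with the infinite summation in \eqref{Ortho-Psi}. The key ingredient is the one-variable degeneration of the normalized $q$-Hahn functions to the normalized little $q$-Jacobi functions: for fixed non-negative integer $n$, fixed $x\in\{0,1,2,\dots\}$ and fixed admissible parameters,
\[
\lim_{N\to\infty}\widehat{h}_{n}(N-x,\alpha,\beta,N;q)=(-1)^{n}\,\widehat{p}_{n}(x,\alpha,\beta;q).
\]
To see this one substitutes $q^{-x}\mapsto q^{-N}q^{x}$ in the ${}_3\phi_2$ of \eqref{q-Hahn}, so that its last numerator and denominator parameters become $q^{-N}q^{x}$ and $q^{-N}$; since $(q^{-N}q^{x};q)_{k}/(q^{-N};q)_{k}\to q^{kx}$, the terminating ${}_3\phi_2$ converges to the ${}_2\phi_1$ of \eqref{q-Jacobi}, and with $(q^{-N};q)_{n}=(-1)^{n}q^{\binom{n}{2}-Nn}(q^{N-n+1};q)_{n}$ one obtains $h_{n}(N-x,\alpha,\beta,N;q)\sim(-1)^{n}q^{\binom{n}{2}-Nn}p_{n}(x,\alpha,\beta;q)$. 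On the other hand $\omega(N-x;\alpha,\beta,N;q)\to\frac{(q^{\alpha+1};q)_{\infty}}{(q;q)_{\infty}}\frac{(q^{\beta+1};q)_{x}}{(q;q)_{x}}q^{x(\alpha+1)}$ while $\eta(n,\alpha,\beta,N;q)\sim\frac{(q^{\alpha+\beta+2};q)_{\infty}}{(q;q)_{\infty}}q^{2\binom{n}{2}-2Nn}\kappa(n,\alpha,\beta;q)$, so the factors $q^{\mp Nn}$ cancel in $\sqrt{\omega/\eta}\,h_{n}$ and the limit is $(-1)^{n}\sqrt{\mu/\kappa}\,p_{n}=(-1)^{n}\widehat{p}_{n}$. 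This is the $q$-Hahn $\to$ little $q$-Jacobi degeneration of the $q$-Askey scheme.

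For the limit of $\Psi$, set $\bm n=\widetilde{\bm s}_{L}$ and $\bm y=\widetilde{\bm x}_{L}$; then $N_{k}=S_{k}$ for $k\le d-1$, $Y_{j}=L-X_{d-1}+X_{j-1}$ for all $j$, and $N_{d}=Y_{d}=L$, so $\delta_{N_{d}Y_{d}}=1$. In the $k$-th factor of \eqref{Psi} the two Hahn parameters are $2N_{k-1}+A_{k}+k-1=2S_{k-1}+A_{k}+k-1$ and $\alpha_{k+1}$, both independent of $L$; the truncation is $N':=Y_{k+1}-N_{k-1}=L-X_{d-1}+X_{k}-S_{k-1}\to\infty$; and $N'-(Y_{k}-N_{k-1})=y_{k+1}=x_{k}$ is fixed. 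Hence for $L$ large enough that all arguments are admissible the $k$-th factor equals $\widehat{h}_{s_{k}}(N'-x_{k},\,2S_{k-1}+A_{k}+k-1,\,\alpha_{k+1},\,N';q)$, and applying the one-variable limit to each of the finitely many factors gives $\Psi_{\widetilde{\bm s}_{L}}^{(\bm\alpha)}(\widetilde{\bm x}_{L})\to\prod_{k=1}^{d-1}(-1)^{s_{k}}\widehat{p}_{s_{k}}(x_{k},2S_{k-1}+A_{k}+k-1,\alpha_{k+1};q)=\mathcal{J}_{\bm s}^{(\bm\alpha)}(\bm x)$, as defined in \eqref{J-Def}.

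For the orthogonality relation I would use that in \eqref{J-Def} the $k$-th factor depends only on the single variable $x_{k}$, while its parameters depend on $\bm s$ only through $S_{k-1}=s_{1}+\dots+s_{k-1}$. Since $\mu(x,\alpha,\beta;q)=O(q^{x(\alpha+1)})$ with $\alpha=2S_{k-1}+A_{k}+k-1>0$, and $p_{n}(x,\alpha,\beta;q)$ is bounded in $q^{x}$, each one-variable sum converges absolutely and $\sum_{\bm x}\mathcal{J}_{\bm s}^{(\bm\alpha)}(\bm x)\,\mathcal{J}_{\bm s'}^{(\bm\alpha)}(\bm x)=\bigl(\prod_{k}(-1)^{s_{k}+s'_{k}}\bigr)\prod_{k=1}^{d-1}I_{k}$ with $I_{k}=\sum_{x_{k}\ge0}\widehat{p}_{s_{k}}(x_{k},2S_{k-1}+A_{k}+k-1,\alpha_{k+1};q)\,\widehat{p}_{s'_{k}}(x_{k},2S'_{k-1}+A_{k}+k-1,\alpha_{k+1};q)$. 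One then inducts on $k$: for $k=1$, $S_{0}=S'_{0}=0$ and \eqref{Ortho-P} gives $I_{1}=\delta_{s_{1}s'_{1}}$; and if $s_{j}=s'_{j}$ for all $j<k$ then $S_{k-1}=S'_{k-1}$, the two little $q$-Jacobi functions in $I_{k}$ share their parameters, and \eqref{Ortho-P} gives $I_{k}=\delta_{s_{k}s'_{k}}$. Thus if $\bm s=\bm s'$ every $I_{k}$ equals $1$ and the sign is $+1$, while if $\bm s\neq\bm s'$ the first index $k_{0}$ of disagreement gives $I_{k_{0}}=0$ and the (finite) product vanishes; this is exactly \eqref{Ortho-J}.

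The only delicate point is the one-variable degeneration: several $q$-powers ($q^{-Nn}$ in $(q^{-N};q)_{n}$, $q^{-2Nn}$ in $\eta$, $q^{x(\alpha+1)}$ in $\omega$) individually diverge or depend on $N$, and one must check that they recombine into a finite $N$-independent limit, with the sign $(-1)^{n}$ coming out correctly. Once that is in hand, the passage to the multivariate limit is a finite product of such limits, and the orthogonality is a telescoping induction that exploits the separated-variable structure of $\mathcal{J}_{\bm s}^{(\bm\alpha)}$.
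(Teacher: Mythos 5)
Your proof is correct and follows the same route as the paper, which simply invokes the $q$-Hahn $\to$ little $q$-Jacobi degeneration \eqref{Useful-Lim} applied factor by factor together with the one-variable orthogonality \eqref{Ortho-P}; you have merely carried out in detail the "long but straightforward" calculation the paper omits, including the cancellation of the $N$-dependent powers of $q$ between $(q^{-N};q)_n$, $\omega$ and $\eta$ that produces the sign $(-1)^{s_k}$. The telescoping induction you use for \eqref{Ortho-J}, exploiting that the first index of disagreement between $\bm{s}$ and $\bm{s}'$ yields a vanishing one-variable factor, is exactly what "follows from \eqref{Ortho-P}" amounts to.
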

\begin{proof}
The orthogonality relation \eqref{Ortho-J} follows from \eqref{Ortho-P}. The limit can be taken directly. The calculation is long, but otherwise straightforward. The following result relating the $q$-Hahn to the $q$-Jacobi polynomials is useful \cite{Gasper2004}:
\begin{align}
\label{Useful-Lim}
\lim_{N\rightarrow \infty} \frac{h_{n}(N-x,\alpha,\beta,N;q)}{(q^{-N};q)_{n}}  = p_n(x,\alpha,\beta,q).
\end{align}
\end{proof}
\subsubsection{A first basis of $q$-Jacobi polynomials}
The functions $\mathcal{J}_{\bm{s}}^{(\bm{\alpha})}(\bm{x})$ naturally give rise to a family of $q$-Jacobi polynomials with $d-1$ variables. Indeed, taking $\bm{x} = (x_1,\ldots, x_{d-1})$, $\bm{s} = (s_1,\ldots, s_{d-1})$ and $\bm{\alpha} = (\alpha_1,\ldots, \alpha_{d})$, one can write the functions $\mathcal{J}_{\bm{s}}^{(\bm{\alpha})}(\bm{x})$ as follows
\begin{align}
\label{J-Pols}
\mathcal{J}_{\bm{s}}^{(\bm{\alpha})}(\bm{x}) =  \sqrt{\frac{\nu^{(\bm{\alpha})}(\bm{x})}{\iota_{\bm{s}}^{(\bm{\alpha})}}}\;
J_{\bm{s}}^{(\bm{\alpha})}(\bm{x}),
\end{align}
with $\nu^{(\bm{\alpha})}(\bm{x})$ given by
\begin{align}
\label{Nu-Weight}
\nu^{(\bm{\alpha})}(\bm{x}) = 
\frac{(q^{\alpha_1 +1};q)_{\infty}}{(q^{A_d + d};q)_{\infty}}
\prod_{k=2}^{d} \frac{(q^{\alpha_k+1};q)_{x_{k-1}}}{(q;q)_{x_{k-1}}} q^{x_{k-1}(A_{k-1}+k-1)}.
\end{align}
The normalization factor $\iota_{\bm{s}}^{(\bm{\alpha})}$ has the expression
\begin{align*}
\iota_{\bm{s}}^{(\bm{\alpha})} = \prod_{k=1}^{d-1}
\frac{1-q^{A_{k+1} + k}}{1-q^{2S_k + A_{k+1} + k}}
\frac{(q;q)_{s_k}(q^{A_k + k};q)_{s_k + 2 S_{k-1}} (q^{\alpha_{k+1} + 1};q)_{s_k}}{(q^{A_{k+1} + k};q)_{s_k + 2 S_{k -1}}}
q^{s_k (2S_{k-1} + A_k + k)},
\end{align*}
and $J_{\bm{s}}^{(\bm{\alpha})}(\bm{x})$ are some multivariate $q$-Jacobi polynomials
\begin{align}
\label{MV-Jacobi}
J_{\bm{s}}^{(\bm{\alpha})}(\bm{x}) = \prod_{k=1}^{d-1} (-1)^{s_k}q^{x_k S_{k-1}}p_{s_k}(x_k, 2 S_{k-1} + A_k + k -1, \alpha_{k+1};q).
\end{align}
These polynomials satisfy the orthogonality relation
\begin{align*}
\sum_{\bm{x}} \nu^{(\bm{\alpha})}(\bm{x}) \; J_{\bm{s}}^{(\bm{\alpha})}(\bm{x})
\; J_{\bm{s}'}^{(\bm{\alpha})}(\bm{x}) = \iota_{\bm{s}}^{(\bm{\alpha})} \delta_{\bm{s} \bm{s}'},
\end{align*}
where $\bm{x}$ runs over the multi-indices of non-negative integers.
\begin{Remark}
The functions $J_{\bm{s}}^{(\bm{\alpha})}(\bm{x})$ are polynomials of total degree $S_{d-1}$ in the variables $q^{\sum_{j=1}^{d-1}x_j}, q^{\sum_{j=2}^{d-1}x_j}, \ldots, q^{x_{d-1}}$ with parameters $\alpha_1, \alpha_2, \ldots, \alpha_{d}$.
\end{Remark}
\subsubsection{A second basis of $q$-Jacobi functions} Let us now consider the limit of the functions $\Xi_{\bm{m}}^{(\bm{\alpha})}(\bm{y})$ as $M_{d} = Y_{d}$ goes to infinity. We obtain the following result.
\begin{Proposition}
Let $\bm{t} = (t_1,\ldots,t_{d-1})$,  $\bm{x} = (x_1,\ldots,x_{d-1})$ and $\bm{\alpha} = (\alpha_1,\ldots, \alpha_{d})$. Moreover, let $\mathcal{Q}_{\bm{t}}^{(\bm{\alpha})}(\bm{x})$ be the functions defined by
\begin{multline}
\label{Q-Def}
\mathcal{Q}_{\bm{t}}^{(\bm{\alpha})}(\bm{x}) =
\prod_{k=1}^{d-2} \widehat{h}_{t_k}(X_{k}-T_{k-1}, 2T_{k-1} + \widetilde{A}_{k+1} + k -1, \alpha_{k+2}, X_{k+1} - T_{k-1};q)
\\
\times 
(-1)^{t_{d-1}} \widehat{p}_{t_{d-1}}(X_{d-1}-T_{d-2}, \alpha_1, 2 T_{d-2} + \widetilde{A}_{d} + d -2;q).
\end{multline}
Upon taking $\widetilde{\bm{x}}_{L} = (L-X_{d-1}, x_1,\ldots, x_{d-1})$ and $\widetilde{\bm{t}}_{L} = (t_1,\ldots, t_{d-1}, L - T_{d-1})$, one has
\begin{align*}
\lim_{L\rightarrow \infty} \Xi_{\widetilde{\bm{t}}_{L}}^{(\bm{\alpha})} (\widetilde{\bm{x}}_{L}) = 
 \mathcal{Q}_{\bm{t}}^{(\bm{\alpha})}(\bm{x}).
\end{align*}
The functions $\mathcal{Q}_{\bm{t}}^{(\bm{\alpha})}(\bm{x})$ satisfy the orthogonality relation
\begin{align*}
\sum_{\bm{x}} \mathcal{Q}_{\bm{t}}^{(\bm{\alpha})}(\bm{x}) \, \mathcal{Q}_{\bm{t}'}^{(\bm{\alpha})}(\bm{x}) = \delta_{\bm{t} \bm{t}'}.
\end{align*}
\end{Proposition}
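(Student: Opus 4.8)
The plan is to follow the strategy used for the limit formula for $\mathcal{J}_{\bm{s}}^{(\bm{\alpha})}(\bm{x})$, reducing everything to a single one-dimensional limit. First I would unpack the substitution. With $\widetilde{\bm{x}}_L=(L-X_{d-1},x_1,\ldots,x_{d-1})$ one has $y_1=L-X_{d-1}$ and $y_{i+1}=x_i$, so $\widetilde{Y}_k=\sum_{i=2}^{k}y_i=X_{k-1}$ for every $k$ and $Y_d=(L-X_{d-1})+X_{d-1}=L$; with $\widetilde{\bm{t}}_L=(t_1,\ldots,t_{d-1},L-T_{d-1})$ one has $M_k=T_k$ for $k\le d-1$ and $M_d=L$, hence $\delta_{M_dY_d}=1$. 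Feeding these values into \eqref{Xi-Def}, the first $d-2$ factors become $\widehat{h}_{t_k}(X_k-T_{k-1},2T_{k-1}+\widetilde{A}_{k+1}+k-1,\alpha_{k+2},X_{k+1}-T_{k-1};q)$, which are independent of $L$ and already agree with the first $d-2$ factors of $\mathcal{Q}_{\bm{t}}^{(\bm{\alpha})}(\bm{x})$. Only the last factor $\widehat{h}_{t_{d-1}}(L-X_{d-1},\alpha_1,2T_{d-2}+\widetilde{A}_{d}+d-2,L-T_{d-2};q)$ depends on $L$, so the whole limit reduces to that of this single $q$-Hahn function.

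Next I would compute that limit from \eqref{Useful-Lim}. Setting $\mathcal{N}=L-T_{d-2}$, $\beta=2T_{d-2}+\widetilde{A}_{d}+d-2$ and noting that $\mathcal{N}-(L-X_{d-1})=X_{d-1}-T_{d-2}$, the polynomial part obeys $h_{t_{d-1}}(L-X_{d-1},\alpha_1,\beta,\mathcal{N};q)/(q^{-\mathcal{N}};q)_{t_{d-1}}\to p_{t_{d-1}}(X_{d-1}-T_{d-2},\alpha_1,\beta;q)$ as $L\to\infty$. What remains is to track the square root of the weight: from the explicit formulas one reads off $\eta(n,\alpha,\beta,N;q)=q^{2\binom{n}{2}-2Nn}\,(q^{\alpha+\beta+2};q)_{n+N}\,\kappa(n,\alpha,\beta;q)/(q;q)_{N-n}$ and, as $N\to\infty$ with $x$ fixed, $\omega(N-x;\alpha,\beta,N;q)\,(q;q)_{N-n}/(q^{\alpha+\beta+2};q)_{n+N}\to\mu(x,\alpha,\beta;q)$; combining these with $(q^{-N};q)_n\sim(-1)^n q^{\binom{n}{2}-Nn}$ gives $(q^{-N};q)_n^2\,\omega(N-x;\alpha,\beta,N;q)/\eta(n,\alpha,\beta,N;q)\to\mu(x,\alpha,\beta;q)/\kappa(n,\alpha,\beta;q)$. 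Multiplying the two limits yields $\widehat{h}_n(N-x,\alpha,\beta,N;q)\to(-1)^n\widehat{p}_n(x,\alpha,\beta;q)$, the sign $(-1)^n$ coming from that of $(q^{-N};q)_n$ for large $N$ (the weights $\omega,\mu,\eta,\kappa$ being positive). Applying this with $n=t_{d-1}$ and $x=X_{d-1}-T_{d-2}$ reproduces the last factor of \eqref{Q-Def}, completing the limit.

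For the orthogonality relation I would argue precisely as for the orthonormality of the $\Xi_{\bm{m}}^{(\bm{\alpha})}(\bm{y})$: $\mathcal{Q}_{\bm{t}}^{(\bm{\alpha})}(\bm{x})$ is a triangular nested product of the orthonormal $q$-Hahn functions $\widehat{h}$ and the orthonormal little $q$-Jacobi functions $\widehat{p}$, so computing $\sum_{\bm{x}}\mathcal{Q}_{\bm{t}}^{(\bm{\alpha})}(\bm{x})\,\mathcal{Q}_{\bm{t}'}^{(\bm{\alpha})}(\bm{x})$ by carrying out the one-dimensional summations in a suitable order and invoking \eqref{Ortho-h} and \eqref{Ortho-P} collapses it step by step to $\delta_{\bm{t}\bm{t}'}$; here $\bm{x}$ runs over the non-negative integer multi-indices for which the functions are defined. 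Alternatively one may obtain it from \eqref{Ortho-Xi} by a limiting argument: with $\bm{m}=\widetilde{\bm{t}}_L$ and $\bm{m}'=\widetilde{\bm{t}'}_L$ the constraint $Y_d=M_d=L$ turns $\sum_{\bm{y}}$ into $\sum_{\bm{x}:\,X_{d-1}\le L}\Xi_{\widetilde{\bm{t}}_L}^{(\bm{\alpha})}(\widetilde{\bm{x}}_L)\,\Xi_{\widetilde{\bm{t}'}_L}^{(\bm{\alpha})}(\widetilde{\bm{x}}_L)=\delta_{\bm{t}\bm{t}'}$, and letting $L\to\infty$ under a dominated-convergence bound on the summands gives the claim.

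The step I expect to be the main obstacle is the weight bookkeeping in the second paragraph: one must separate the divergent factor $(q^{-N};q)_n$ from the ratio $\omega/\eta$, recognize the cancellation of the powers $q^{2\binom{n}{2}-2Nn}$, and keep careful track of the sign; every other ingredient is either $L$-independent or a routine application of an orthogonality relation already recorded.
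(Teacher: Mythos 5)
Your proposal is correct and follows essentially the same route as the paper: the substitution makes all but the last $q$-Hahn factor independent of $L$, the remaining factor is handled by a single application of \eqref{Useful-Lim} together with the asymptotics of $(q^{-N};q)_n$ and the weight ratio $\omega/\eta\to\mu/\kappa$ (which correctly produces the sign $(-1)^{t_{d-1}}$), and the orthogonality collapses via nested use of \eqref{Ortho-h} and \eqref{Ortho-P}. The paper's proof is just a terser version of this; your weight bookkeeping in the second paragraph supplies exactly the "long but straightforward" computation it omits.
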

\begin{proof}
The orthogonality relation follows from \eqref{Ortho-h} and \eqref{Ortho-P}. The calculation of the limit is direct, and involves using \eqref{Useful-Lim} once.
\end{proof}
\subsubsection{A second basis of mixed $q$-Jacobi and $q$-Hahn polynomials}
The functions $\mathcal{Q}_{\bm{t}}^{(\bm{\alpha})}(\bm{x})$ also give rise to a family of multivariate orthogonal polynomials. These polynomials, given below, are a mixture of $q$-Hahn and $q$-Jacobi polynomials. Upon taking $\bm{x} = (x_1,\ldots, x_{d-1})$, $\bm{t} = (t_1,\ldots, t_{d-1})$, and $\bm{\alpha} = (\alpha_1,\ldots, \alpha_{d})$, one can write
\begin{align}
\label{Q-Pols}
\mathcal{Q}_{\bm{t}}^{(\bm{\alpha})}(\bm{x}) = 
\sqrt{\frac{\nu^{(\bm{\alpha})}(\bm{x})}{\tau_{\bm{t}}^{(\bm{\alpha})}}} Q_{\bm{t}}^{(\bm{\alpha})}(\bm{x}),
\end{align}
with $\nu^{(\bm{\alpha})}(\bm{x})$ given by \eqref{Nu-Weight}. The normalization constant $\tau_{\bm{t}}^{(\bm{\alpha})}$ is of the form
\begin{multline*}
\tau_{\bm{t}}^{(\bm{\alpha})} = 
q^{2\binom{T_{d-2}}{2}}
\\
\times \prod_{k=1}^{d-2} 
\frac{1-q^{\widetilde{A}_{k+2} + k }}{1-q^{2 T_k + \widetilde{A}_{k+2} + k}} 
\frac{(q;q)_{t_k} (q^{\widetilde{A}_{k+1} + k};q)_{t_k + 2 T_{k-1}} (q^{\alpha_{k+2}+ 1};q)_{t_k}}{(q^{\widetilde{A}_{k+2}+k};q)_{t_k + 2 T_{k-1}}} q^{t_k (2 T_{k-1} + \widetilde{A}_{k+1}+k)}
\\
\times \frac{1-q^{A_{d}+d-1}}{1-q^{2T_{d-1}+A_d+d-1}} 
\frac{(q;q)_{t_{d-1}} (q^{\widetilde{A}_d + d -1};q)_{t_{d-1} + 2T_{d-2}} (q^{\alpha_1+1};q)_{t_{d-1}} }{(q^{A_d + d -1};q)_{t_{d-1}+2 T_{d-2}}}
q^{T_{d-1}(\alpha_1 + 1)},
\end{multline*}
and the $(d-1)$-variate polynomials $Q_{\bm{t}}^{(\bm{\alpha})}(\bm{x})$ have the expression
\begin{multline*}
Q_{\bm{t}}^{(\bm{\alpha})}(\bm{x}) = 
\prod_{k=1}^{d-2} h_{t_k}(X_{k} - T_{k-1}, 2 T_{k-1} + \widetilde{A}_{k+1} + k -1, \alpha_{k+2}, X_{k+1} - T_{k-1};q)
\\
\times 
(-1)^{t_{d-1}} q^{X_{d-1}T_{d-2}} \,p_{t_{d-1}}(X_{d-1}-T_{d-2}, \alpha_1, 2 T_{d-2} + \widetilde{A}_{d} + d - 2; q).
\end{multline*}
These polynomials are orthogonal with respect to the same measure as the multivariate Jacobi polynomials \eqref{MV-Jacobi}; that is
\begin{align}
\label{Ortho-Q}
\sum_{\bm{x}} \nu^{(\bm{\alpha})}(\bm{x}) \; Q_{\bm{t}}^{(\bm{\alpha})}(\bm{x})
\; Q_{\bm{t}'}^{(\bm{\alpha})}(\bm{x}) = \tau_{\bm{t}}^{(\bm{\alpha})} \delta_{\bm{t} \bm{t}'}.
\end{align}
\begin{Remark}
The functions $Q_{\bm{t}}^{(\bm{\alpha})}(\bm{x})$ are polynomials of total degree $T_{d-1}$ in the variables $q^{\sum_{j=1}^{d-1}x_j}, q^{\sum_{j=2}^{d-1}x_j}, \ldots, q^{x_{d-1}}$ with parameters $\alpha_1, \alpha_2, \ldots, \alpha_{d}$.
\end{Remark}
\section{Interbasis expansion coefficients and $q$-Racah polynomials}\label{se3}
In this section, we shall consider the expansion coefficients between the bases $\Psi_{\bm{n}}^{(\bm{\alpha})}(\bm{y})$ and $\Xi_{\bm{m}}^{(\bm{\alpha})}(\bm{y})$ and show that they are expressed in terms of Gasper \& Rahman's multivariate $q$-Racah polynomials. Given the interpretation of the functions $\Psi_{\bm{n}}^{(\bm{\alpha})}(\bm{y})$ and $\Xi_{\bm{m}}^{(\bm{\alpha})}(\bm{y})$ as nested Clebsch-Gordan coefficients of the $d$-fold tensor product representation $W^{(\bm{\alpha})}$, this will provide an interpretation of the multivariate $q$-Racah polynomials in terms of $3nj$ symbols for $su_q(1,1)$.
\subsection{The main object}\label{ss3.1}
Let $\bm{n} = (n_1,\ldots, n_{d})$, $\bm{m} = (m_1,\ldots, m_{d})$ and $\bm{\alpha} = (\alpha_1,\ldots, \alpha_d)$. We define the functions $\mathcal{R}_{\bm{m}}^{(\bm{\alpha})}(\bm{n})$ as the coefficients that appear in the expansion
\begin{align}
\label{Exp-1-A}
\Xi_{\bm{m}}^{(\bm{\alpha})}(\bm{y}) = \sum_{\bm{n}} \mathcal{R}_{\bm{m}}^{(\bm{\alpha})}(\bm{n}) \;\Psi_{\bm{n}}^{(\bm{\alpha})}(\bm{y}),
\end{align}
where $\Psi_{\bm{n}}^{(\bm{\alpha})}(\bm{y})$ and $\Xi_{\bm{m}}^{(\bm{\alpha})}(\bm{y})$ are respectively given by \eqref{Psi} and \eqref{Xi-Def}. Since the functions $\Psi_{\bm{n}}^{(\bm{\alpha})}(\bm{y})$ and $\Xi_{\bm{m}}^{(\bm{\alpha})}(\bm{y})$ are both orthonormal, one can write the coefficients $\mathcal{R}_{\bm{m}}^{(\bm{\alpha})}(\bm{n})$ as
\begin{align}
\label{R-Def}
\mathcal{R}_{\bm{m}}^{(\bm{\alpha})}(\bm{n}) = \sum_{\bm{y}} \Xi_{\bm{m}}^{(\bm{\alpha})}(\bm{y}) \Psi_{\bm{n}}^{(\bm{\alpha})}(\bm{y}).
\end{align}
Because both $\Psi_{\bm{n}}^{(\bm{\alpha})}(\bm{y})$ and $\Xi_{\bm{m}}^{(\bm{\alpha})}(\bm{y})$ arise from the expansion of joint eigenvectors of $q^{A_0^{[1;d]}}$ and the total Casimir operator $\Gamma^{[1;d]}$, it is clear that expansion coefficients $\mathcal{R}_{\bm{m}}^{(\bm{\alpha})}(\bm{n})$ vanish unless $M_d = N_d$ and $M_{d-1} = N_{d-1}$. These two conditions imply in particular that the coefficients $\mathcal{R}_{\bm{m}}^{(\bm{\alpha})}(\bm{n})$ vanish unless $n_{d}= m_{d}$. Moreover, since the total Casimir operator $\Gamma^{[1;d]}$ commutes with the raising/lowering operators $A_{\pm}^{[1;d]}$, the coefficients $\mathcal{R}_{\bm{m}}^{(\bm{\alpha})}(\bm{n})$ are in fact independent of $n_d$ and $m_d$. Consequently, with $N_{d-1} = M_{d-1}$, one can take $n_{d} = L - N_{d-1}$ and $m_d = L - M_{d-1}$ in \eqref{R-Def} and take $L \rightarrow \infty$ without affecting the value of $\mathcal{R}_{\bm{m}}^{(\bm{\alpha})}(\bm{n})$. This leads to the expression
\begin{align}
\label{R-Def-2}
\mathcal{R}_{\bm{m}}^{(\bm{\alpha})}(\bm{n}) = \sum_{\bm{x}}  \mathcal{Q}_{\bm{m}}^{(\bm{\alpha})}(\bm{x})
\mathcal{J}_{\bm{n}}^{(\bm{\alpha})}(\bm{x}),
\end{align}
where the summation runs over all multi-indices $\bm{x}=(x_1,\ldots, x_{d-1})$ of non-negative integers and where we have taken $\bm{m} = (m_1,\ldots,m_{d-1})$ and $\bm{n} = (n_1,\ldots, n_{d-1})$, allowed given the independence of $\mathcal{R}_{\bm{m}}^{(\bm{\alpha})}(\bm{n})$ on the last quantum numbers $m_{d}$ and $n_d$. The functions $\mathcal{J}_{\bm{n}}^{(\bm{\alpha})}(\bm{x})$ and $\mathcal{Q}_{\bm{m}}^{(\bm{\alpha})}(\bm{x})$ are given by \eqref{J-Def} and \eqref{Q-Def}, respectively. One has also
\begin{align}
\label{Exp-2-A}
\mathcal{Q}_{\bm{m}}^{(\bm{\alpha})}(\bm{x}) = \sum_{\bm{n}} \mathcal{R}_{\bm{m}}^{(\bm{\alpha})}(\bm{n})\,\mathcal{J}_{\bm{n}}^{(\bm{\alpha})}(\bm{x}),
\end{align}
where the sum runs over the multi-indices $\bm{n} = (n_1,\ldots,n_{d-1})$ such that $N_{d-1} = M_{d-1}$. Here also, the coefficients are independent of the coordinates $\bm{x}$.

\subsection{Connection coefficients between multivariate polynomials}
The functions $\mathcal{R}_{\bm{m}}^{(\bm{\alpha})}(\bm{n})$ are connection coefficients between families of multivariate orthogonal polynomials. Indeed, it follows from \eqref{Psi-Pols}, \eqref{Xi-Pols} and \eqref{Exp-1-A} that
\begin{align}\label{eq-Racah}
\sqrt{\frac{1}{\Omega_{\bm{m}}^{(\bm{\alpha})}}}\;
G_{\bm{m}}^{(\bm{\alpha})}(\bm{y}) = \sum_{\bm{n}} \mathcal{R}_{\bm{m}}^{(\bm{\alpha})}(\bm{n})\; \sqrt{\frac{1}{\Lambda_{\bm{n}}^{(\bm{\alpha})}}}\; H_{\bm{n}}^{(\bm{\alpha})}(\bm{y}),
\end{align}
where the sum is over the multi-indices $\bm{n}$ such that $N_{d} = M_{d}$ and $N_{d-1} = M_{d-1}$. One can also use the orthogonality relation for $H_{\bm{n}}^{(\bm{\alpha})}(\bm{y})$ to get the formula
\begin{align*}
\mathcal{R}_{\bm{m}}^{(\bm{\alpha})}(\bm{n}) = \sqrt{\frac{1}{\Lambda_{\bm{n}}^{(\bm{\alpha})}\Omega_{\bm{m}}^{(\bm{\alpha})}}} \sum_{\bm{y}} \rho^{(\bm{\alpha})}(\bm{y}) H_{\bm{n}}^{(\bm{\alpha})}(\bm{y})\; G_{\bm{m}}^{(\bm{\alpha})}(\bm{y}),
\end{align*}
where the sum is restricted to all $\bm{y}$ such that $Y_d = N_d = M_d$. Furthermore, from \eqref{J-Pols}, \eqref{Q-Pols} and \eqref{Exp-2-A} one finds that
\begin{align*}
\sqrt{\frac{1}{\tau_{\bm{m}}^{(\bm{\alpha})}}}\;
Q_{\bm{m}}^{(\bm{\alpha})}(\bm{x}) = \sum_{\bm{n}} \mathcal{R}_{\bm{m}}^{(\bm{\alpha})}(\bm{n})\; \sqrt{\frac{1}{\iota_{\bm{n}}^{(\bm{\alpha})}}}\; J_{\bm{n}}^{(\bm{\alpha})}(\bm{x}),
\end{align*}
which is equivalent to
\begin{align*}
\mathcal{R}_{\bm{m}}^{(\bm{\alpha})}(\bm{n}) = \sqrt{\frac{1}{\tau_{\bm{m}}^{(\bm{\alpha})}\iota_{\bm{n}}^{(\bm{\alpha})}}}
\sum_{\bm{x}} \nu^{(\bm{\alpha})}(\bm{x}) \; Q_{\bm{m}}^{(\bm{\alpha})}(\bm{x})\, J_{\bm{n}}^{(\bm{\alpha})}(\bm{x}).
\end{align*}
As can be seen, the coefficients $\mathcal{R}_{\bm{m}}^{(\bm{\alpha})}(\bm{n})$ serve as connection coefficients between bases of multivariate $q$-Hahn or $q$-Jacobi polynomials. Since these bases are themselves orthogonal, it follows from elementary linear algebra that the functions $\mathcal{R}_{\bm{m}}^{(\bm{\alpha})}(\bm{n})$ satisfy the orthogonality relations
\begin{subequations}
\begin{align}
\label{Ortho-a}
&\sum_{\bm{n}}
\mathcal{R}_{\bm{m}}^{(\bm{\alpha})}(\bm{n})
\mathcal{R}_{\bm{m}'}^{(\bm{\alpha})}(\bm{n})=\delta_{\bm{m}\bm{m}'},
\\
&
\label{Ortho-b}
\sum_{\bm{m}}\mathcal{R}_{\bm{m}}^{(\bm{\alpha})}(\bm{n})
\mathcal{R}_{\bm{m}}^{(\bm{\alpha})}(\bm{n}')=\delta_{\bm{n}\bm{n}'}.
\end{align}
\end{subequations}
These relations are meaningful when the indices $\bm{n}$, $\bm{n}'$, $\bm{m}$ and $\bm{m}'$ are such that $N_{d-1} = N_{d-1}' = M_{d-1} = M_{d-1}'$, which insure that $\mathcal{R}_{\bm{m}}^{(\bm{\alpha})}(\bm{n})$ does not trivially vanish.

\subsection{The $d=3$ case: one-variable $q$-Racah polynomials}
Let us now consider the expansion coefficients $\mathcal{R}_{\bm{m}}^{(\bm{\alpha})}(\bm{n})$ for $d=3$. As we noted above, the coefficients $\mathcal{R}_{\bm{m}}^{(\bm{\alpha})}(\bm{n})$ vanish unless $M_2=N_2$ and $m_3=n_3$, and are independent of the quantum numbers $m_3$ and $n_3$. Throughout this subsection, we assume that $\bm{m}$ and $\bm{n}$ satisfy these conditions, and to simplify the notation, we will omit $m_3$ and $n_3$ when we display the coefficients, i.e. we will write simply $\mathcal{R}_{m_1,m_2}^{(\alpha_1,\alpha_2,\alpha_3)}(n_1,n_2)$ instead of $\mathcal{R}_{m_1,m_2,m_3}^{(\alpha_1,\alpha_2,\alpha_3)}(n_1,n_2,n_3)$.

Upon writing \eqref{Exp-2-A} explicitly, one gets
\begin{multline}
\label{Expansion-1}
(-1)^{m_2}q^{m_1(x_1+x_2)}
h_{m_1}(x_1, \alpha_2,\alpha_3,x_1+x_2;q) 
p_{m_2}(x_1+x_2-m_1,\alpha_1, 2m_1+\alpha_2+\alpha_3+1;q)
\\
= \sum_{\bm{n}} 
\mathcal{R}_{m_1,m_2}^{(\alpha_1,\alpha_2,\alpha_3)}(n_1,n_2)
\sqrt{\frac{\tau_{m_1,m_2}^{(\alpha_1,\alpha_2,\alpha_3)}}{\iota_{n_1,n_2}^{(\alpha_1,\alpha_2,\alpha_3)}}}
\\
\times
(-1)^{n_1 + n_2} q^{n_1 x_2} p_{n_1}(x_1,\alpha_1,\alpha_2;q)  p_{n_2}(x_2,2n_1 + \alpha_1 + \alpha_2 + 1, \alpha_3;q),
\end{multline}
where $h_{n}(x,\alpha,\beta,N;q)$ and $p_{n}(x,\alpha,\beta;q)$ are the $q$-Hahn and $q$-Jacobi polynomials defined in \eqref{q-Hahn} and \eqref{q-Jacobi}. We now set $x_1 = (u-v)/2$,  $x_2 = (u+v)/2$ as well as $t = q^{u+1}$, and we consider the limit of \eqref{Expansion-1} as $v\rightarrow \infty$. Recalling that $0 < q < 1$, one can use the transformation formula \cite{Gasper2004}
\begin{align*}
{}_3\phi_2\left(\genfrac{}{}{0pt}{}{q^{-n}, c/b, 0}{c, cq/bz};q,q\right)
=
\frac{(bz/c;q)_{\infty}}{(bzq^{-n}/c;q)_{\infty}}\;
{}_2\phi_1\left(\genfrac{}{}{0pt}{}{q^{-n}, b}{c};q,z\right),
\end{align*}
to find that the left-hand side of \eqref{Expansion-1} becomes under the limit $v\rightarrow \infty$
\begin{multline*}
\lim_{v\rightarrow \infty} (-1)^{m_2}q^{m_1 u}
h_{m_1}((u-v)/2, \alpha_2,\alpha_3,u;q) 
p_{m_2}(u-m_1,\alpha_1, 2m_1+\alpha_2+\alpha_3+1;q)
\\
= (-1)^{m_1 + m_2} \; q^{\binom{m_1}{2}} (q^{\alpha_2 +1};q)_{m_1} (q^{\alpha_1 +1};q)_{m_2}
\\
\times
{}_2\phi_1\left(\genfrac{}{}{0pt}{}{q^{-m_1}, q^{-m_1-\alpha_3}}{q^{\alpha_2+1}};q,q^{m_1+\alpha_2+\alpha_3 +1}t\right)\;
{}_2\phi_1\left(\genfrac{}{}{0pt}{}{q^{-m_2}, q^{m_2+2m_1+\alpha_1+\alpha_2 + \alpha_3 +2}}{q^{\alpha_1+1}};q,q^{-m_1}t\right).
\end{multline*}
Upon taking the same limit on the right-hand side of \eqref{Expansion-1}, one easily finds
\begin{multline*}
\lim_{v\rightarrow \infty} (-1)^{n_1 + n_2} q^{n_1 (u+v)/2} p_{n_1}((u-v)/2,\alpha_1,\alpha_2;q)  p_{n_2}((u+v)/2,2n_1 + \alpha_1 + \alpha_2 + 1, \alpha_3;q)
\\
= (-1)^{n_1 + n_2}\frac{(q^{-n_1};q)_{n_1} (q^{n_1+\alpha_1+\alpha_2+1};q)_{n_1} (q^{2n_1 +\alpha_1 +\alpha_2 +1};q)_{n_2}}{(q;q)_{n_1}}t^{n_1}.
\end{multline*}
As a consequence, one has
\begin{multline}
\label{Expansion-2}
{}_2\phi_1\left(\genfrac{}{}{0pt}{}{q^{-m_1}, q^{-m_1-\alpha_3}}{q^{\alpha_2+1}};q,q^{m_1+\alpha_2+\alpha_3 +1}t\right)\;
{}_2\phi_1\left(\genfrac{}{}{0pt}{}{q^{-m_2}, q^{m_2 + 2m_1+\alpha_1+\alpha_2 + \alpha_3  +2}}{q^{\alpha_1+1}};q,q^{-m_1}t\right)
\\
= \sum_{n_1, n_2} \mathcal{R}_{m_1,m_2}^{(\alpha_1,\alpha_2,\alpha_3)}(n_1, n_2)
\\
\times
\left[ \sqrt{\frac{\tau_{m_1,m_2}^{(\alpha_1,\alpha_2,\alpha_3)}}{\iota_{n_1,n_2}^{(\alpha_1,\alpha_2,\alpha_3)}}}
\frac{(q^{-n_1};q)_{n_1} (q^{n_1+\alpha_1+\alpha_2+1};q)_{n_1} (q^{2n_1 +\alpha_1 +\alpha_2 +1};q)_{n_2}}{(q;q)_{n_1} q^{\binom{m_1}{2}} (q^{\alpha_1 +1};q)_{m_1} (q^{\alpha_2 +1};q)_{m_2}}
\right] t^{n_1},
\end{multline}
where the sum runs over all $n_1$, $n_2$ such that $n_1 + n_2 = m_1 + m_2$. The generating relation \eqref{Expansion-2} can be seen to coincide with that of the one-variable $q$-Racah polynomials. Indeed, let $r_{n}(x,\mathfrak{a}, \mathfrak{b}, \mathfrak{c},N;q)$ be the $q$-Racah polynomials \cite{Koekoek2010}
\begin{multline*}
r_{n}(x,\mathfrak{a}, \mathfrak{b}, \mathfrak{c},N;q)=
\\
(q^{\mathfrak{a+1}};q)_{n} (q^{\mathfrak{b} + \mathfrak{c} +1};q)_{n}(q^{-N};q)_{n} q^{n(N-\mathfrak{c})/2}
\;
{}_4\phi_3\left(\genfrac{}{}{0pt}{}{q^{-n}, q^{n+\mathfrak{a}+\mathfrak{b}+1}, q^{-x}, q^{x+\mathfrak{c}-N}}{q^{\mathfrak{a}+1}, q^{\mathfrak{b}+\mathfrak{c}+1}, q^{-N}};q,q \right).
\end{multline*}
Let $\sigma(x,\mathfrak{a}, \mathfrak{b}, \mathfrak{c},N;q)$ be defined as 
\begin{align*}
\sigma(x,\mathfrak{a}, \mathfrak{b}, \mathfrak{c},N;q) =
\frac{1-q^{2x+\mathfrak{c}-N}}{1-q^{\mathfrak{c}-N}}
\frac{(q^{\mathfrak{c}-N};q)_{x}(q^{\mathfrak{a}+1};q)_{x} (q^{\mathfrak{b}+\mathfrak{c}+ 1};q)_{x}(q^{-N};q)_{x}}{(q;q)_x (q^{\mathfrak{c}-\mathfrak{a}-N};q)_{x}(q^{-\mathfrak{b}-N};q)_{x}(q^{\mathfrak{c}+1};q)_{x}} q^{-x(\mathfrak{a} + \mathfrak{b}+1)},
\end{align*}
and let $\upsilon(n,\mathfrak{a}, \mathfrak{b}, \mathfrak{c}, N;q)$ have the expression
\begin{multline*}
\upsilon(n,\mathfrak{a}, \mathfrak{b}, \mathfrak{c}, N;q) =
\frac{(q^{-\mathfrak{c}};q)_{N} (q^{\mathfrak{a} + \mathfrak{b}+2};q)_{N}}{(q^{\mathfrak{a}-\mathfrak{c} + 1};q)_{N} (q^{\mathfrak{b}+1};q)_{N}}
(q;q)_n(q^{\mathfrak{a}+1};q)_n (q^{\mathfrak{b}+1};q)_n
\\
\times  (q^{\mathfrak{a}-\mathfrak{c}+1};q)_n
(q^{\mathfrak{b}+\mathfrak{c}+1};q)_n (q^{-N};q)_n \frac{1-q^{\mathfrak{a}+\mathfrak{b}+1}}{1-q^{2n +\mathfrak{a} + \mathfrak{b} + 1}} \frac{(q^{N + \mathfrak{a} + \mathfrak{b} + 2};q)_n}{(q^{\mathfrak{a} + \mathfrak{b} +1};q)_{n}}.
\end{multline*}
The $q$-Racah functions $\widehat{r}_n(x,\mathfrak{a}, \mathfrak{b}, \mathfrak{c},N;q)$ defined as
\begin{align*}
\widehat{r}_n(x,\mathfrak{a}, \mathfrak{b}, \mathfrak{c},N;q) = \sqrt{\frac{\sigma(x,\mathfrak{a}, \mathfrak{b}, \mathfrak{c},N;q)}{\upsilon(n,\mathfrak{a}, \mathfrak{b}, \mathfrak{c}, N;q)}}
\; r_n(x,\mathfrak{a}, \mathfrak{b}, \mathfrak{c},N;q),
\end{align*}
satisfy the orthogonality relation
\begin{align*}
\sum_{x=0}^{N}\widehat{r}_n(x,\mathfrak{a}, \mathfrak{b}, \mathfrak{c},N;q)\;\widehat{r}_{n'}(x,\mathfrak{a}, \mathfrak{b}, \mathfrak{c},N;q) = \delta_{nn'}.
\end{align*}
Their generating relation reads
\begin{multline}
\label{Gen-Racah}
{}_2\phi_1\left(\genfrac{}{}{0pt}{}{q^{-n}, q^{-n-\mathfrak{b}}}{q^{\mathfrak{a}+1}};q,q^{n+\mathfrak{a}+\mathfrak{b}+1}t\right)\;
{}_2\phi_1\left(\genfrac{}{}{0pt}{}{q^{n-N}, q^{n+\mathfrak{b} + \mathfrak{c} +1}}{q^{\mathfrak{c}-\mathfrak{a}-N}};q,q^{-n}t\right)
\\
=\sum_{x=0}^{N} \frac{(q^{\mathfrak{b}+\mathfrak{c}+1};q)_x(q^{-N};q)_x}{(q;q)_x (q^{\mathfrak{c}-\mathfrak{a}-N};q)_x} \frac{q^{n(\mathfrak{c}-N)/2}}{(q^{\mathfrak{a+1}};q)_{n} (q^{\mathfrak{b} + \mathfrak{c} +1};q)_{n}(q^{-N};q)_{n} } r_{n}(x,\mathfrak{a},\mathfrak{b}, \mathfrak{c},N;q)\;t^x.
\end{multline}
Upon comparing \eqref{Expansion-2} with \eqref{Gen-Racah}, we obtain the following.
\begin{Proposition}\label{Prop5}
When $d=3$, the expansion coefficients $\mathcal{R}_{m_1, m_2}^{(\alpha_1,\alpha_2,\alpha_3)}(n_1, n_2)$ can be expressed in terms of the univariate $q$-Racah polynomials. Explicitly, one has
\begin{align*}
\mathcal{R}_{m_1, m_2}^{(\alpha_1,\alpha_2,\alpha_3)}(n_1, n_2) = \delta_{N_2 M_2}\;
(-1)^{m_1}\widehat{r}_{m_1}(n_1, \alpha_2, \alpha_3, n_1 + n_2 + \alpha_1 + \alpha_2 + 1, n_1 + n_2;q).
\end{align*}
\end{Proposition}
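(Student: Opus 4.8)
The plan is to recognize the generating relation \eqref{Expansion-2} as a specialization of the generating relation \eqref{Gen-Racah} for the univariate $q$-Racah polynomials, and then to read off $\mathcal{R}_{m_1,m_2}^{(\alpha_1,\alpha_2,\alpha_3)}(n_1,n_2)$ by comparing coefficients of powers of $t$. First I would match parameters, taking $\mathfrak{a}=\alpha_2$, $\mathfrak{b}=\alpha_3$, $N=n_1+n_2=m_1+m_2$, $\mathfrak{c}=n_1+n_2+\alpha_1+\alpha_2+1$ and $n=m_1$ in \eqref{Gen-Racah}, and checking at the level of $q$-exponents that with these choices the product of the two ${}_2\phi_1$ factors on the left-hand side of \eqref{Gen-Racah} becomes exactly the product of the two ${}_2\phi_1$ factors on the left-hand side of \eqref{Expansion-2}. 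Concretely, $q^{n+\mathfrak{a}+\mathfrak{b}+1}=q^{m_1+\alpha_2+\alpha_3+1}$; in the second factor $q^{n-N}=q^{-m_2}$, $q^{n+\mathfrak{b}+\mathfrak{c}+1}=q^{m_2+2m_1+\alpha_1+\alpha_2+\alpha_3+2}$, $q^{\mathfrak{c}-\mathfrak{a}-N}=q^{\alpha_1+1}$, the argument of the second ${}_2\phi_1$ is $q^{-n}t=q^{-m_1}t$, and $\mathfrak{c}-N=\alpha_1+\alpha_2+1$.

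Since the two sides now share the same left-hand member, their right-hand members agree as polynomials in $t$ of degree $N$. The sum in \eqref{Expansion-2} carries $t^{n_1}$ with $n_1\in\{0,\dots,N\}$ and $n_2=N-n_1$, while the sum in \eqref{Gen-Racah} carries $t^{x}$ with $x\in\{0,\dots,N\}$; identifying $x$ with $n_1$ and equating coefficients of $t^{n_1}$ produces, for each $n_1$, a single scalar identity expressing $\mathcal{R}_{m_1,m_2}^{(\alpha_1,\alpha_2,\alpha_3)}(n_1,n_2)$ as $r_{m_1}(n_1,\alpha_2,\alpha_3,n_1+n_2+\alpha_1+\alpha_2+1,n_1+n_2;q)$ multiplied by the quotient of the explicit prefactor of $r_n$ in \eqref{Gen-Racah}, evaluated at the above parameters, by the explicit prefactor of $\mathcal{R}$ in \eqref{Expansion-2}. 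It then remains to simplify this quotient: substituting the $d=3$ forms of $\tau_{m_1,m_2}^{(\alpha_1,\alpha_2,\alpha_3)}$ and $\iota_{n_1,n_2}^{(\alpha_1,\alpha_2,\alpha_3)}$ together with the definitions of $\sigma$ and $\upsilon$, and using elementary identities such as $(q^{-k};q)_{k}=(-1)^{k}q^{-k(k+1)/2}(q;q)_{k}$ for the terminating symbols, one should find that the quotient equals $(-1)^{m_1}\sqrt{\sigma/\upsilon}$ with $\sigma$ and $\upsilon$ taken at the $q$-Racah parameters of the statement, so that the right-hand side collapses to $(-1)^{m_1}\widehat{r}_{m_1}(n_1,\alpha_2,\alpha_3,n_1+n_2+\alpha_1+\alpha_2+1,n_1+n_2;q)$. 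The factor $\delta_{N_2M_2}$ in the statement is automatic, since the derivation of \eqref{Expansion-2} already presupposes $N_2=M_2$.

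The conceptual content --- the parameter match and the coefficient comparison --- is immediate, so the one genuine obstacle is the normalization bookkeeping in the last step: reconciling the many $q$-Pochhammer symbols and half-integer powers of $q$ hidden inside $\tau_{m_1,m_2}^{(\alpha_1,\alpha_2,\alpha_3)}$, $\iota_{n_1,n_2}^{(\alpha_1,\alpha_2,\alpha_3)}$, $\sigma$ and $\upsilon$, and pinning down the overall sign $(-1)^{m_1}$. I would organize this by separating the factors depending only on $\bm{m}$ from those depending only on $\bm{n}$, since the two groups must balance independently, which reduces the verification to a pair of short $q$-Pochhammer computations. No idea beyond the generating-function comparison is required.
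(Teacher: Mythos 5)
Your proposal is correct and follows exactly the paper's own route: the same parameter identifications $n=m_1$, $x=n_1$, $N=n_1+n_2$, $\mathfrak{a}=\alpha_2$, $\mathfrak{b}=\alpha_3$, $\mathfrak{c}=N+\alpha_1+\alpha_2+1$ are used to match \eqref{Expansion-2} with \eqref{Gen-Racah}, after which only the normalization bookkeeping remains. Your write-up is in fact more explicit than the paper's about how to organize that final computation.
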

\begin{proof}
The result follows from the above discussion and from comparing \eqref{Expansion-2} and \eqref{Gen-Racah}. It is seen that the two coincide, up to normalization factors, if one takes
\begin{align*}
n = m_1, \quad x = n_1, \quad N = n_1 + n_2,\quad \mathfrak{a} = \alpha_2,\quad \mathfrak{b} = \alpha_3,\quad \mathfrak{c} = N + \alpha_1 +\alpha_2 +1.
\end{align*}
The calculation of the normalization factors is straightforward.
\end{proof}
Let us note that our derivation of Proposition \ref{Prop5} is much simpler than the one presented in \cite{Dunkl1980}, which used different methods. Moreover, our construction has the advantage of unifying two of the main interpretations of the one-variable $q$-Racah polynomials: 1) their interpretation as connection coefficients for 2-variable $q$-Hahn or $q$-Jacobi polynomials, and 2) their interpretation as $6j$ coefficients for positive-discrete series representations of $su_q(1,1)$. 
In the next section, we shall give a new interpretation in connection with $q$-deformed quantum superintegrable systems.

\begin{Remark}\label{generate-Racah}
We can use special values of the $\bm{y}$ variables in equation \eqref{eq-Racah} to obtain other generating functions for the $q$-Racah polynomials. First, note that by using the $q$-analog of the  Chu-Vandermonde identity, the $q$-Hahn polynomials in \eqref{q-Hahn} reduce to simple products of $q$-Pochhammer symbols when $x=-(\alpha+1)$ or $x=N$:
\begin{align*}
&h_{n}(-\alpha-1,\alpha,\beta,N;q) =(q^{\alpha+1};q)_n(q^{-N-\alpha-\beta-n-1};q)_n\,q^{n(n+\alpha+\beta+1)},\\
&h_{n}(N,\alpha,\beta,N;q) =(q^{-N};q)_n(q^{-n-\beta};q)_n\,q^{n(n+\alpha+\beta+1)}.
\end{align*}
If we fix $N_3=Y_3=L$, with $y_1=-\alpha_1-1$, $y_2=L+\alpha_1+1$, $y_3=0$ and replace $L$ with $w$, where $w=q^{-L-\alpha_1-1}$, then equations \eqref{MV-q-Hahn}, \eqref{G-Pols},  together with the above formulas show that
\begin{align*}
&H_{\bm{n}}^{(\bm{\alpha})}(-\alpha_1-1,L+\alpha_1+1,0)\equiv  (wq^{-\alpha_2-n_1};q)_{n_1} (wq^{\alpha_1+n_1+1};q)_{n_2},\\
&G_{\bm{m}}^{(\bm{\alpha})}(-\alpha_1-1,L+\alpha_1+1,0) \equiv  (w;q)_{m_1} (wq^{-(\alpha_2+\alpha_3+m_1+m_2+1)};q)_{m_2},
\end{align*}
where $\equiv$ means that the equality holds up to a multiple independent of $w$. For generic values of $\alpha_1$, $\alpha_2$, $\alpha_3$, the polynomials of $w$ in each of the sets: 
\begin{itemize}
\item $\left\{(w;q)_{m_1} (wq^{-(\alpha_2+\alpha_3+m_1+m_2+1)};q)_{m_2}:(m_1,m_2)\in\mathbb{N}_0^2\right\}$, and 
\item $\left\{ (wq^{-\alpha_2-n_1};q)_{n_1} (wq^{\alpha_1+n_1+1};q)_{n_2}:(n_1,n_2)\in\mathbb{N}_0^2\right\}$
\end{itemize} 
are linearly independent. Therefore, if we substitute $y_1=-\alpha_1-1$, $y_2=L+\alpha_1+1$, $y_3=0$ into  \eqref{eq-Racah} and use the above formulas, we obtain the $q$-Racah polynomials  $\mathcal{R}_{m_1, m_2}^{(\alpha_1,\alpha_2,\alpha_3)}(n_1, n_2)$ as connecting coefficients between the different bases $\{(wq^{-\alpha_2-n_1};q)_{n_1} (wq^{\alpha_1+n_1+1};q)_{n_2}\}$ and $\{(w;q)_{m_1} (wq^{-(\alpha_2+\alpha_3+m_1+m_2+1)};q)_{m_2}\}$ of polynomials in  $w$. This connection was observed in \cite{Koelink1998} in a different setting and notations, see Remark 4.11(iii) on page 815.
\end{Remark}

\subsection{Multivariate $q$-Racah polynomials and $3nj$ coefficients for $su_q(1,1)$}
We shall now generalize the result of Proposition \ref{Prop5} to the multivariate case. While this result can be proven by induction, the proof is cumbersome and fails to provide additional insight into the structure of the coefficients $\mathcal{R}_{\bm{m}}^{(\bm{\alpha})}(\bm{n})$. In the following, we thus opt to construct the expression for the connection coefficients $\mathcal{R}_{\bm{m}}^{(\bm{\alpha})}(\bm{n})$ in the $d=4$ case in terms of 2-variable $q$-Racah polynomials. The result is then seen to extend directly to an arbitrary number of variables.

\subsubsection{The $d=4$ case}
Consider the basis functions $\Xi_{\bm{m}}^{(\bm{\alpha})}(\bm{y})$ when $d=4$. One has
\begin{multline}
\label{Step-0}
\Xi_{\bm{m}}^{(\bm{\alpha})}(\bm{y}) = 
\widehat{h}_{m_1}(y_2, \alpha_2, \alpha_3, y_{23};q)
\\
\times \widehat{h}_{m_2}(y_{23}-m_1, 2m_1 +\alpha_{23} +1, \alpha_4, y_{234} -m_{1};q)
\\
\times \widehat{h}_{m_3}(y_1, \alpha_1, 2m_{12} + \alpha_{234} + 2, y_{1234} -m_{12};q),
\end{multline}
where we used the notation $x_{ij} = x_i + x_j$ and $x_{ijk}=x_i  + x_j + x_k$. If one defines $\widetilde{y}_2 = y_{23} -m_1$, $\widetilde{\alpha}_2 = 2m_1 + \alpha_{23} +1$, $\widetilde{\alpha}_3 = \alpha_4$, $\widetilde{y}_1 = y_1$ and $\widetilde{y}_3 = y_4$, it is seen that \eqref{Step-0} reads
\begin{multline}
\label{Step-1}
\Xi_{\bm{m}}^{(\bm{\alpha})}(\bm{y}) = 
\widehat{h}_{m_1}(y_2, \alpha_2, \alpha_3, y_{23};q)
\\
\times \widehat{h}_{m_2}(\widetilde{y}_2, \widetilde{\alpha}_2, \widetilde{\alpha_3}, \widetilde{y}_{23};q)\, \widehat{h}_{m_3}(\widetilde{y}_1,\alpha_1,2m_2 +\widetilde{\alpha}_{23} + 1, \widetilde{y}_{123} - m_2).
\end{multline}
It is observed that the last two $q$-Hahn functions in \eqref{Step-1} have the appropriate form to apply Proposition \ref{Prop5}. We thus have
\begin{multline}
\label{Step-2}
\Xi_{\bm{m}}^{(\bm{\alpha})}(\bm{y}) =
\\
\sum_{\substack{n_2, n_3\\ n_2 + n_3 =m_2 + m_3}}
(-1)^{m_2} \widehat{r}_{m_2}(n_2,2m_1 + \alpha_{23} +1, \alpha_4, n_{23} +2m_1 + \alpha_{123} +2, n_{23};q)
\\
\times \widehat{h}_{m_1}(y_2, \alpha_2, \alpha_3, y_{23};q)\;
\widehat{h}_{n_2}(y_1,\alpha_1, 2m_1+\alpha_{23}+1,y_{123}-m_1;q)
\\
\times \widehat{h}_{n_3}(y_{123} -m_1 -n_2, 2n_2 + 2m_1 +\alpha_{123}+2, \alpha_4, y_{1234} -m_1 -n_2;q).
\end{multline}
It is further seen that the first two $q$-Hahn functions now have the appropriate form to apply Proposition \ref{Prop5} a second time. This leads to the following:
\begin{multline}
\label{Step-4}
\Xi_{\bm{m}}^{(\bm{\alpha})}(\bm{y}) = \sum_{\substack{n_2, n_3\\ n_2 + n_3 =m_2 + m_3}} \sum_{\substack{n_1', n_2'\\ n_1' + n_2' = m_1 + n_2}} 
\\
\times (-1)^{m_2} \widehat{r}_{m_2}(n_2,2m_1 + \alpha_{23} +1, \alpha_4, n_{23} +2m_1 + \alpha_{123} +2, n_{23};q)
\\
\times (-1)^{m_1} \widehat{r}_{m_1}(n_1',\alpha_2, \alpha_3, m_1 + n_2 +\alpha_{12}+1, m_1 + n_2;q)
\\
\times \widehat{h}_{n_1'}(y_1, \alpha_1, \alpha_2, y_1 + y_2;q) \widehat{h}_{n_2'}(y_{12}-n_1',2n_1'+\alpha_{12}+1,\alpha_3,y_{123}-n_1';q)
\\
\times \widehat{h}_{n_3}(y_{123}-m_1 -n_2, 2n_2 + 2m_1 +\alpha_{123}+2,\alpha_4,y_{1234}-m_1 -n_2;q).
\end{multline}
Upon taking $n_2 = n_1'+n_2'-m_1$ and then renaming $n_1' \rightarrow n_1$, $n_2'\rightarrow n_2$, one finds
\begin{align*}
\Xi_{\bm{m}}^{(\bm{\alpha})}(\bm{y}) = \sum_{\bm{n}} \mathcal{R}_{\bm{m}}^{(\bm{\alpha})}(\bm{n})\; \Psi_{\bm{n}}^{(\bm{\alpha})}(\bm{y}),
\end{align*}
where the summation runs over the multi-indices  $\bm{n}$ such that $n_1 + n_2 + n_3 = m_1 + m_2 + m_3$ and where 
\begin{multline}
\label{Final-1}
\mathcal{R}_{\bm{m}}^{(\bm{\alpha})}(\bm{n}) = (-1)^{m_1 + m_2} 
\widehat{r}_{m_1}(n_1,\alpha_2,\alpha_3,n_{12} + \alpha_{12} +1, n_{12};q)
\\
\times \widehat{r}_{m_2}(n_{12}-m_1, 2m_1 + \alpha_{23}+1, \alpha_4, n_{123}+m_1 +\alpha_{123}+2, n_{123}-m_1;q).
\end{multline}
\subsubsection{The general result}
Let us now state the general result giving the explicit expression of the coefficients $\mathcal{R}_{\bm{m}}^{(\bm{\alpha})}(\bm{n})$ for an arbitrary number of variables.
\begin{Proposition}
Let $\bm{m} = (m_1,\ldots, m_{d})$, $\bm{n} = (n_1,\ldots, n_{d})$ and $\bm{\alpha} = (\alpha_1,\ldots, \alpha_{d})$. The interbasis expansion coefficients $\mathcal{R}_{\bm{m}}^{(\bm{\alpha})}(\bm{n})$ defined in \eqref{Exp-1-A} have the explicit expression
\begin{multline}
\label{3nj-Coef}
\mathcal{R}_{\bm{m}}^{(\bm{\alpha})}(\bm{n}) = \delta_{m_{d} n_{d}} \delta_{M_{d-1} N_{d-1}}\prod_{k=1}^{d-2} (-1)^{m_k} 
\\
\times
\widehat{r}_{m_k}(N_{k} - M_{k-1}, 2 M_{k-1} + \widetilde{A}_{k+1} + k -1, \alpha_{k+2},
N_{k+1} + M_{k-1} + A_{k+1} + k, N_{k+1} - M_{k-1};q).
\end{multline}
\end{Proposition}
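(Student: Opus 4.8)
The plan is to prove \eqref{3nj-Coef} by iterating Proposition \ref{Prop5} exactly $d-2$ times, following the same pattern already displayed for $d=4$ in \eqref{Step-0}--\eqref{Final-1}; organised as an induction on $d$, the base case is $d=3$ (Proposition \ref{Prop5}). I would first introduce a chain of $d-1$ coupling schemes $\mathcal{C}_0,\mathcal{C}_1,\ldots,\mathcal{C}_{d-2}$ interpolating between the $\Xi$-basis and the $\Psi$-basis: in $\mathcal{C}_\ell$ one couples the representations labelled by $[2;d-\ell]$ together in left-nested order, then couples this with the first representation, and finally adjoins the representations $d-\ell+1,\ldots,d$ one at a time. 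Reading off the expansion coefficients of $\mathcal{C}_0$ and $\mathcal{C}_{d-2}$ in the direct product basis, and comparing with the definitions \eqref{Xi-Def} and \eqref{Psi}, one sees that $\mathcal{C}_0$ gives $\Xi_{\bm m}^{(\bm\alpha)}(\bm y)$ and $\mathcal{C}_{d-2}$ gives $\Psi_{\bm n}^{(\bm\alpha)}(\bm y)$.

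The main work is to show that the passage from $\mathcal{C}_\ell$ to $\mathcal{C}_{\ell+1}$ is the single elementary recoupling that exchanges the bracketings $\bigl(1,(B,\alpha_{*})\bigr)$ and $\bigl((1,B),\alpha_{*}\bigr)$ of a three-block subsystem, and is thus governed by one univariate $q$-Racah polynomial. Concretely, I would isolate in the expansion coefficients of $\mathcal{C}_\ell$ the two consecutive $q$-Hahn factors that encode the coupling of the first representation (parameter $\alpha_1$) with the two-block system consisting of the partial block $[2;d-\ell-1]$ and the single factor $d-\ell$; after the change of labels that absorbs the preceding $q$-Hahn factors into a composite representation (exactly the relabelling used to pass from \eqref{Step-0} to \eqref{Step-1}), this pair takes the shape of the $d=3$ function $\Xi$ appearing in Proposition \ref{Prop5}, so Proposition \ref{Prop5} replaces it by the corresponding $d=3$ $\Psi$-pair at the cost of one factor $(-1)^{m_k}\,\widehat{r}_{m_k}(\,\cdot\,;q)$ with $k=d-\ell-2$, leaving the remaining $q$-Hahn factors as inert spectators. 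Letting $\ell$ run from $0$ to $d-3$ accumulates $d-2$ such factors, and by uniqueness of the expansion in the orthonormal $\Psi$-basis this produces $\mathcal{R}_{\bm m}^{(\bm\alpha)}(\bm n)=\prod_{k=1}^{d-2}(-1)^{m_k}\widehat{r}_{m_k}(\,\cdot\,;q)$.

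The step I expect to be the genuine obstacle is the parameter bookkeeping: verifying that the argument list of the $k$-th factor telescopes to $\bigl(N_k-M_{k-1},\,2M_{k-1}+\widetilde{A}_{k+1}+k-1,\,\alpha_{k+2},\,N_{k+1}+M_{k-1}+A_{k+1}+k,\,N_{k+1}-M_{k-1};q\bigr)$. Two structural facts should keep this under control. First, the second entry $\mathfrak{a}=2M_{k-1}+\widetilde{A}_{k+1}+k-1$ is precisely the eigenvalue label of the intermediate Casimir $\Gamma^{[2;k+1]}$ on $u_{\bm m}^{(\bm\alpha)}$ (see \eqref{eigen-2}), so it is delivered automatically by the nested construction. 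Second, the first parameter of every local $d=3$ subsystem is always $\alpha_1$, which forces the fourth entry to equal $N+\mathfrak{a}+\alpha_1+1$, that is $N_{k+1}+M_{k-1}+A_{k+1}+k$, since $A_{k+1}-\widetilde{A}_{k+1}=\alpha_1$. The prefactor $\delta_{m_d n_d}\delta_{M_{d-1}N_{d-1}}$ reappears because the partial-sum constraints created at the successive uses of Proposition \ref{Prop5} telescope to $N_{d-1}=M_{d-1}$, while $n_d=m_d$ is imposed by the common $q^{A_0^{[1;d]}}$-eigenvalue, as already noted in Section \ref{ss3.1}. All of this is a finite, purely mechanical computation whose structure is identical to the one that gave \eqref{Final-1} for $d=4$; writing it out for general $d$ is what makes the argument cumbersome, and a direct multivariate analogue of the generating-function identity \eqref{Expansion-2} does not appear to shorten it, so I would not pursue that route.
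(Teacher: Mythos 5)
Your proposal is correct and is essentially the paper's own argument: an induction on $d$ with base case Proposition \ref{Prop5}, realized by iterating the elementary three-block recoupling $d-2$ times exactly as in the explicit $d=4$ computation \eqref{Step-0}--\eqref{Final-1}. Your chain of intermediate coupling schemes $\mathcal{C}_0,\ldots,\mathcal{C}_{d-2}$ is just a more explicit packaging of the paper's inductive step (peel off $\widehat{h}_{m_1}$, relabel, invoke the hypothesis at level $d-1$, apply Proposition \ref{Prop5} once more), and your parameter bookkeeping, including the identification of the fourth $q$-Racah argument via $A_{k+1}-\widetilde{A}_{k+1}=\alpha_1$, checks out.
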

\begin{proof}
By induction; following the steps outlined in the previous subsection. The basic case $d=3$ is proven in Proposition \ref{Prop5}. Suppose that the result holds at level $d-1$. Consider the basis functions $\Xi_{\bm{m}}^{(\alpha)}(\bm{y})$ at level $d$. One can write
\begin{align*}
\Xi_{\bm{m}}^{(\alpha)}(\bm{y}) = \widehat{h}_{m_1}(y_2, \alpha_2, \alpha_3,y_2 + y_3;q)
\times 
\left[
\Xi_{m_2,\ldots, m_{d}}^{(\widetilde{\alpha}_1,\ldots, \widetilde{\alpha}_{d-1})} (\widetilde{y}_1,\ldots, \widetilde{y}_{d-1})
\right],
\end{align*}  
with 
\begin{align*}
&\widetilde{\alpha}_1 = \alpha_1,\qquad \widetilde{\alpha}_2 = 2m_1 + \alpha_{23} +1, \qquad 
\widetilde{\alpha}_k = \alpha_{k+1},
\\
&\widetilde{y}_1 = y_1, \qquad \widetilde{y}_2 = y_2 + y_3 -m_1,\qquad \widetilde{y}_{k} = y_{k+1}.
\end{align*}
Then, one uses the induction hypothesis to develop the functions $\Xi_{m_2,\ldots, m_{d}}^{(\widetilde{\alpha}_1,\ldots, \widetilde{\alpha}_{d-1})} (\widetilde{y}_1,\ldots, \widetilde{y}_{d-1})$ in the basis functions $\Psi_{n_2,\ldots, n_{d}}^{(\widetilde{\alpha}_1,\ldots, \widetilde{\alpha}_{d-1})}(\widetilde{y}_1,\ldots, \widetilde{y}_{d-1})$. The procedure is completed by applying Proposition \ref{Prop5} one last time. 
\end{proof}
We have thus obtained the explicit expression \eqref{3nj-Coef} for the expansion coefficients $\mathcal{R}_{\bm{m}}^{(\bm{\alpha})}(\bm{n})$ between the $q$-Hahn bases $\Psi_{\bm{n}}^{(\bm{\alpha})}(\bm{y})$ and $\Xi_{\bm{m}}^{(\bm{\alpha})}(\bm{y})$ defined in \eqref{Psi} and \eqref{Xi-Def}. These are also the expansion coefficients between the $q$-Jacobi bases $\mathcal{J}_{\bm{n}}^{(\bm{\alpha})}(\bm{y})$ and $\mathcal{Q}_{\bm{m}}^{(\bm{\alpha})}(\bm{y})$ defined in \eqref{J-Def} and \eqref{Q-Def}. Moreover, since these coefficients are the overlaps between basis vectors corresponding to irreducible decompositions of the multifold tensor product representation $W^{(\bm{\alpha})}$ of $su_q(1,1)$, the coefficients $\mathcal{R}_{\bm{m}}^{(\bm{\alpha})}(\bm{n})$ can also be considered as particular $3nj$-coefficients.

\begin{Remark}
Using \eqref{3nj-Coef}, we show in the next subsection that the  coefficients $\mathcal{R}_{\bm{m}}^{(\bm{\alpha})}(\bm{n})$ can be expressed in terms of multivariate $q$-Racah polynomials  defined by Gasper and Rahman in \cite{Gasper&Rahman_2007}. Similarly to the $d=3$ case discussed in Remark~\ref{generate-Racah}, we can use special values of the $\bm{y}$ variables in equation \eqref{eq-Racah} to obtain different identities for these polynomials. However, if we fix the values of $y_1,\dots,y_{d}$, so that the $q$-Hahn polynomials reduce to products of $q$-Pochhammer symbols, there will be just one free variable left, which is not sufficient to characterize the $q$-Racah polynomials in the multivariate setting.
\end{Remark}

\subsection{Multivariate $q$-Racah polynomials}
The coefficients $\mathcal{R}_{\bm{m}}^{(\bm{\alpha})}(\bm{n})$ can be expressed in terms of the multivariate $q$-Racah polynomials introduced by Gasper and Rahman in \cite{Gasper&Rahman_2007}. In the $s$-variable case, these polynomials can be written as
\begin{multline}
\label{GR-Racah}
Z_{\bm{\ell}}(\bm{y};\bm{\beta},M;q) = 
\\
\prod_{k=1}^{s}
r_{\ell_k}(y_k - L_{k-1}, 2 L_{k-1} + \beta_{k} - \beta_0 - 1, \beta_{k+1} - \beta_k - 1, y_{k+1} + L_{k-1} + \beta_{k}, y_{k+1} - L_{k-1};q),
\end{multline}
where $y_0 = 0$, $y_{s+1} = M$, and $M\in \mathbb{N}$ and $\bm{\beta} = (\beta_0,\beta_1,\ldots, \beta_{s+1})$ are parameters. It is not hard to see that $Z_{\bm{\ell}}(\bm{y};\bm{\beta},M;q)$ is a polynomial of total degree $L_s$ in the variables $z_k = q^{-y_k} + \beta_k q^{y_k}$. Moreover, they are orthogonal on  the simplex 
\begin{align*}
W_M=\{\bm{y} \in \mathbb{N}_0^s: 0 \le y_1 \le y_2 \le \cdots \le y_s \le  M\},
\end{align*}
with respect to the weight function
\begin{equation}\label{R-weight}
\chi^{(\bm{\beta})}(\bm{y})=\prod_{k=0}^{s}\frac{(q^{\beta_{k+1}-\beta_{k}};q)_{y_{k+1}-y_{k}}
	(q^{\beta_{k+1}};q)_{y_{k+1}+y_{k}}}
{(q;q)_{y_{k+1}-y_{k}}(q^{\beta_{k}+1};q)_{y_{k+1}+y_{k}}}
\prod_{k=1}^{s}\frac{1-q^{2y_k+\beta_{k}}} {1-q^{\beta_k}}q^{y_k(\beta_{k-1}-\beta_{k})}.
\end{equation}
The parametrization in \cite{Gasper&Rahman_2007} can be obtained by taking 
$a_1=q^{\beta_1}$, $a_k=q^{\beta_{k}-\beta_{k-1}}$ for $k=2,\dots,s+1$ and 
$b=q^{\beta_1-\beta_0-1}$.
The square of the norm is
\begin{multline}
\label{R-norm}
\Upsilon_{\bm{\ell}}^{(\bm{\beta})}= \langle Z_{\bm{\ell}}(\bm{y};\bm{\beta},M;q), Z_{\bm{\ell}}(\bm{y};\bm{\beta},M;q)\rangle
\\
=q^{-M(2L_s+\beta_{s})+L_s(L_{s}-1)+\beta_0(M-L_s)}
\frac{(q^{\beta_{s+1}};q)_{L+L_s}(q^{\beta_{s+1}-\beta_{0}};q)_{L+L_s} }{(q;q)_{L-L_s}(q^{\beta_{0}+1};q)_{L-L_s}}
\\
\times\prod_{k=1}^{s}\frac{(q;q)_{\ell_k}(q^{\beta_{k+1}-\beta_{k}};q)_{\ell_k}(q^{\beta_{k}-\beta_{0}};q)_{L_k+L_{k-1}}}
{(q^{\beta_{k+1}-\beta_0-1};q)_{L_k+L_{k-1}}}\frac{1-q^{\beta_{k+1}-\beta_0-1}}{1-q^{\beta_{k+1}-\beta_0-1+2L_{k}}}.
\end{multline}
Upon comparing \eqref{GR-Racah} with \eqref{3nj-Coef}, it is seen that the coefficients \eqref{3nj-Coef} will be proportional to the polynomials \eqref{GR-Racah} if one takes $s= d-2$ and
\begin{align}
\label{Para}
\begin{aligned}
& \ell_i = m_i, \qquad i=1,\ldots, d-2,
\\
& y_i = N_{i}, \qquad i=1,\ldots, d-1,
\\
&\beta_{k} = A_{k+1} + k,\qquad k=0,1,\ldots,d-1.
\end{aligned}
\end{align}
The expansion coefficients \eqref{3nj-Coef} can be expressed in terms of the multivariate $q$-Racah polynomials \eqref{GR-Racah} as follows:
\begin{align}
\label{Racah-Rahman}
\mathcal{R}_{\bm{m}}^{(\bm{\alpha})}(\bm{n}) =
\delta_{m_{d} n_{d}} \delta_{M_{d-1} N_{d-1}}(-1)^{M_{d-2}}
\sqrt{\frac{\chi^{(\bm{\beta})}(\bm{y})}{\Upsilon_{\bm{\ell}}^{(\bm{\beta})}}} Z_{\bm{\ell}}(\bm{y};\bm{\beta},M;q),
\end{align}
where the connection between the original variables and parameters $\bm{n}, \bm{m}, \bm{\alpha}$ and $\bm{y}, \bm{\ell}, \bm{\beta}, M$ is provided by \eqref{Para}.
\subsection{Duality}
The orthogonality relation \eqref{Ortho-a} for the interbasis expansion coefficients $\mathcal{R}_{\bm{m}}^{(\bm{\alpha})}(\bm{n})$ is equivalent to the orthogonality relation for the multivariate Gasper--Rahman $q$-Racah polynomials \eqref{GR-Racah}. The second orthogonality relation \eqref{Ortho-b} can be explained through a duality relation satisfied by the coefficients $\mathcal{R}_{\bm{m}}^{(\bm{\alpha})}(\bm{n})$.

Let us define dual indices $\bm{\tilde{\ell}}$, variables $\bm{\tilde{y}}$, and parameters $\bm{{\tilde{\beta}}}$ by
\begin{align}
\begin{aligned}
\label{Dual-Parameters}
&\tilde{\ell}_j=y_{s+2-j}-y_{s+1-j}, \qquad j=1,\dots, s,
\\
&\tilde{y}_j=M-L_{s+1-j},\qquad j=1,\dots, s,
\\
&{\tilde{\beta}}_0 =\beta_0,
\\
&{\tilde{\beta}}_j=\beta_0-\beta_{s+2-j}-2M+1, \qquad j=1,\dots, s+1.
\end{aligned}
\end{align} 
\begin{Proposition}
\label{Duality}
The map 
\begin{equation}\label{inv}
(\bm{\ell},\bm{y},\bm{\beta},M)\mapsto (\bm{\tilde{\ell}},\bm{\tilde{y}},\bm{{\tilde{\beta}}},M),
\end{equation}
is an involution. Moreover, the $q$-Racah polynomials \eqref{GR-Racah} satisfy the following duality relation
\begin{align}
& \frac{Z_{\bm{\ell}}(\bm{y};\bm{\beta}, M;q)}{q^{M L_s}(q^{-M};q)_{L_s}(q^{-M-\beta_0})_{L_s}\prod_{j=1}^{s}q^{\ell_j\beta_j/2}(q^{\beta_{j+1}-\beta_{j}};q)_{\ell_j}} \nonumber \\
&\qquad\qquad= \frac{Z_{\bm{\tilde{\ell}}}(\bm{\tilde{y}};\bm{{\tilde{\beta}}},M;q)}{q^{M\tilde{L}_s}(q^{-M};q)_{\tilde{L}_s}(q^{-M-{\tilde{\beta}}_0})_{\tilde{L}_s}\prod_{j=1}^{s}q^{\tilde{\ell}_j{\tilde{\beta}}_j/2}(q^{{\tilde{\beta}}_{j+1}-{\tilde{\beta}}_{j}};q)_{\tilde{\ell}_j}}. \label{polduality}
\end{align}
\end{Proposition}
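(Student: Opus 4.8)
The plan is to prove the two assertions separately: that \eqref{inv} is an involution by a short direct computation, and that the duality \eqref{polduality} holds by reducing it, factor by factor, to a transformation of terminating balanced ${}_4\phi_3$ series.

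\emph{The involution.} The key preliminary identity is that the partial sums of the dual degree indices telescope: $\tilde L_k := \sum_{i=1}^{k}\tilde\ell_i = \sum_{i=1}^{k}(y_{s+2-i}-y_{s+1-i}) = y_{s+1}-y_{s+1-k} = M-y_{s+1-k}$, using $y_0=0$ and $y_{s+1}=M$. Substituting \eqref{Dual-Parameters} into itself then gives, for $j=1,\dots,s$, that $\tilde{\tilde\ell}_j = \tilde y_{s+2-j}-\tilde y_{s+1-j} = (M-L_{j-1})-(M-L_j) = \ell_j$, that $\tilde{\tilde y}_j = M-\tilde L_{s+1-j} = M-(M-y_j) = y_j$, that $\tilde{\tilde\beta}_0 = \tilde\beta_0 = \beta_0$, and that $\tilde{\tilde\beta}_j = \tilde\beta_0 - \tilde\beta_{s+2-j} - 2M + 1 = \beta_0 - (\beta_0 - \beta_j - 2M + 1) - 2M + 1 = \beta_j$. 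Hence \eqref{inv} squares to the identity.

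\emph{The duality relation.} I would exploit the product structure \eqref{GR-Racah}. Stripping from the $k$-th factor $r_{\ell_k}(y_k-L_{k-1},\dots;q)$ its explicit ${}_4\phi_3$-prefactor leaves a terminating, balanced ${}_4\phi_3$ in base and argument $q$ (the balance being checked by a one-line exponent count), and the same holds for each factor of $Z_{\bm{\tilde\ell}}(\bm{\tilde y};\bm{\tilde\beta},M;q)$. The heart of the proof is a transformation of such series — a suitable composition of Sears' ${}_4\phi_3$ transformations — applied to the $k$-th factor: using the dictionary $\tilde L_{k-1}=M-y_{s+2-k}$, $\tilde y_k-\tilde L_{k-1}=y_{s+2-k}-L_{s+1-k}$ and $\tilde\beta_j=\beta_0-\beta_{s+2-j}-2M+1$ from \eqref{Dual-Parameters}, one verifies that this transformation carries the $k$-th factor of $Z_{\bm\ell}(\bm y;\bm\beta,M;q)$ onto the $(s+1-k)$-th factor of $Z_{\bm{\tilde\ell}}(\bm{\tilde y};\bm{\tilde\beta},M;q)$, up to an explicit product of $q$-shifted factorials. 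Taking the product over $k=1,\dots,s$ and collecting all these constants together with the ${}_4\phi_3$-prefactors of the original and dual $r_{\ell_k}$'s — using repeatedly $(a;q)_{m+n}=(a;q)_m(aq^m;q)_n$ and the telescoping of the $\beta_k$'s and $L_k$'s — then produces exactly the quotient of the two bracketed expressions in \eqref{polduality}; the half-integer powers $q^{\ell_j\beta_j/2}$ come from the $q^{n(N-\mathfrak c)/2}$ normalisation built into $r_n$, and the boundary factors $(q^{-M};q)_{L_s}$, $(q^{-M-\beta_0};q)_{L_s}$ from the terms $k=1$ and $k=s$. Equivalently, the identity can be obtained by induction on $s$, peeling off one univariate factor at a time, with base case $s=1$ the classical self-duality of the one-variable $q$-Racah polynomials.

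The main obstacle is this last bookkeeping: under the transformation the partial sums $L_k$ (degree data) and $y_k$ (variable data) exchange roles, so the indices and $q$-exponents of the accumulated prefactors depend on $k$ in a way whose cancellation across the product is delicate — one must track which $q$-shifted factorials get absorbed into the dual polynomial and which survive into the overall constant. A more structural, though not obviously shorter, alternative uses \eqref{Racah-Rahman} and the second orthogonality relation \eqref{Ortho-b}: the latter says the coefficients $\mathcal{R}_{\bm m}^{(\bm\alpha)}(\bm n)$, read in the variable $\bm m$, form an orthonormal family whose weight becomes, after the substitution \eqref{Dual-Parameters}, the orthogonality weight \eqref{R-weight} of the family $Z_{\bm{\tilde\ell}}(\bm{\tilde y};\bm{\tilde\beta},M;q)$; since orthogonal polynomials of prescribed degrees are determined up to scalars by their measure, one recovers \eqref{polduality} with the constant fixed by a single leading-coefficient comparison.
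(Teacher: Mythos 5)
Your main argument --- applying Sears' transformation to each univariate factor so that the ${}_4\phi_3$ series of the $k$-th factor of $Z_{\bm{\ell}}(\bm{y};\bm{\beta},M;q)$ matches that of the $(s+1-k)$-th factor of $Z_{\bm{\tilde{\ell}}}(\bm{\tilde{y}};\bm{{\tilde{\beta}}},M;q)$, and then collecting the surviving $q$-shifted-factorial prefactors --- is exactly the proof given in the paper, and your explicit check of the involution via $\tilde{L}_k=M-y_{s+1-k}$ correctly supplies a step the paper leaves implicit. One caution about your closing ``structural alternative'': multivariate orthogonal polynomials of a given total degree are \emph{not} determined up to scalars by the measure alone (the degree-$n$ component of the orthogonal complement has dimension greater than one), so matching the two families would additionally require identifying them as joint eigenfunctions of the commuting bispectral operators or via a triangularity/leading-term argument; since this is offered only as an aside, it does not affect the validity of your main proof.
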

\begin{proof}
The statement can be deduced from \cite{Iliev2011}, but we provide a direct proof here for the convenience of the reader. 
Applying Sears' transformation formula \cite[page~49, formula (2.10.4)]{Gasper2004}) one can show that 
\begin{equation}\label{1Dtransformation}
r_{n}(x;a,b,c,N;q)=r_{n}(N-x;b,a,-c,N;q).
\end{equation}
Using \eqref{1Dtransformation}, we can rewrite the multivariate $q$-Racah polynomials as follows
\begin{equation}
\label{GR-Racah2}
\begin{split}
&Z_{\bm{\ell}}(\bm{y};\bm{\beta},M;q) = \\
&\prod_{k=1}^{s}
r_{\ell_k}(y_{k+1} - y_{k};\beta_{k+1}-\beta_{k} -1, 2L_{k-1}+\beta_{k}-\beta_{0}-1,-y_{k+1}-L_{k-1}-\beta_{k},y_{k+1}-L_{k-1};q).
\end{split}
\end{equation}
Substituting the dual variables \eqref{Dual-Parameters} into \eqref{GR-Racah2}, one can show that the ${}_4\phi_3$ series in $r_{\ell_k}$ above coincides with the ${}_4\phi_3$ series in  $r_{\tilde{\ell}_{s+1-k}}$ in the dual variables. Thus all ${}_4\phi_3$ terms in 
$Z_{\bm{\ell}}(\bm{y};\bm{\beta},M;q)/Z_{\bm{\tilde{\ell}}}(\bm{\tilde{y}};\bm{{\tilde{\beta}}},M;q)$ cancel. Simplifying and rearranging the remaining products, we obtain equation \eqref{polduality}.
\end{proof}

Using formulas \eqref{R-weight}, \eqref{R-norm} and \eqref{Dual-Parameters} one can check that 
\begin{multline}
\label{dident}
\frac{\Upsilon_{\bm{\ell}}^{(\bm{\beta})}\,\chi^{(\widetilde{\bm{\beta}})}(\bm{\tilde{y}})}{\left[q^{M L_s}(q^{-M};q)_{L_s}(q^{-M-\beta_0})_{L_s}\prod_{j=1}^{d}q^{\ell_j\beta_j/2}(q^{\beta_{j+1}-\beta_{j}};q)_{\ell_j}\right]^2} 
\\
= q^{-M(\beta_{s}+{\tilde{\beta}}_{s}-\beta_0+M)}\frac{(q^{\beta_{s+1}};q)_{2M}(q^{{\tilde{\beta}}_{s+1}};q)_{2M}}{\left[(q;q)_{M}(q^{\beta_{0}+1};q)_M\right]^2}.
\end{multline}
Since the right-hand side of \eqref{dident} is invariant under the involution  \eqref{inv}, the last identity combined with Proposition~\ref{Duality} shows that $q$-Racah polynomials, viewed as polynomials in the indices, are also orthogonal with respect to an appropriate multivariate $q$-Racah weight \eqref{R-weight}. More precisely, the following statement holds.
\begin{Corollary}
With the notations above we have
\begin{equation}\label{dualorth}
\sqrt{\frac{\chi^{(\bm{\beta})}(\bm{y})}{\Upsilon_{\bm{\ell}}^{(\bm{\beta})}}}Z_{\bm{\ell}}(\bm{y};\bm{\beta},M;q)
=\sqrt{\frac{\chi^{(\tilde{\bm{\beta}})}(\bm{\tilde{y}})}{\Upsilon_{\bm{\tilde{\ell}}}^{(\bm{{\tilde{\beta}}})}}}Z_{\bm{\tilde{\ell}}}(\bm{\tilde{y}};\bm{{\tilde{\beta}}},M;q).
\end{equation}
\end{Corollary}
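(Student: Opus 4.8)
The plan is to obtain \eqref{dualorth} directly from the polynomial duality \eqref{polduality} together with the scalar identity \eqref{dident}, essentially with no new computation. Denote by $\mathcal{D}_{\bm{\ell}}^{(\bm{\beta})}$ the denominator appearing on the left-hand side of \eqref{polduality}, so that \eqref{polduality} reads $Z_{\bm{\ell}}(\bm{y};\bm{\beta},M;q)/\mathcal{D}_{\bm{\ell}}^{(\bm{\beta})} = Z_{\bm{\tilde{\ell}}}(\bm{\tilde{y}};\bm{\tilde{\beta}},M;q)/\mathcal{D}_{\bm{\tilde{\ell}}}^{(\bm{\tilde{\beta}})}$; in this notation, the content of \eqref{dident} is that $\Upsilon_{\bm{\ell}}^{(\bm{\beta})}\,\chi^{(\widetilde{\bm{\beta}})}(\bm{\tilde{y}})/\bigl[\mathcal{D}_{\bm{\ell}}^{(\bm{\beta})}\bigr]^{2}$ equals a quantity $\mathcal{K}$ that is manifestly invariant under the involution \eqref{inv}.

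First I would reduce \eqref{dualorth} to a scalar identity. Substituting $Z_{\bm{\ell}}(\bm{y};\bm{\beta},M;q) = (\mathcal{D}_{\bm{\ell}}^{(\bm{\beta})}/\mathcal{D}_{\bm{\tilde{\ell}}}^{(\bm{\tilde{\beta}})})\,Z_{\bm{\tilde{\ell}}}(\bm{\tilde{y}};\bm{\tilde{\beta}},M;q)$ into the left-hand side of \eqref{dualorth} by means of \eqref{polduality} and cancelling the common factor $Z_{\bm{\tilde{\ell}}}$ (which is nonzero for generic parameters; the remaining cases follow by continuity), one sees that \eqref{dualorth} is equivalent to the scalar statement
\[
\frac{\chi^{(\bm{\beta})}(\bm{y})\,\bigl[\mathcal{D}_{\bm{\ell}}^{(\bm{\beta})}\bigr]^{2}}{\Upsilon_{\bm{\ell}}^{(\bm{\beta})}}
= \frac{\chi^{(\widetilde{\bm{\beta}})}(\bm{\tilde{y}})\,\bigl[\mathcal{D}_{\bm{\tilde{\ell}}}^{(\bm{\tilde{\beta}})}\bigr]^{2}}{\Upsilon_{\bm{\tilde{\ell}}}^{(\bm{\tilde{\beta}})}}
\]
together with an agreement of signs. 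To prove the displayed identity I would solve the restated form of \eqref{dident} for $\bigl[\mathcal{D}_{\bm{\ell}}^{(\bm{\beta})}\bigr]^{2}/\Upsilon_{\bm{\ell}}^{(\bm{\beta})}$ and substitute, turning the left-hand side into $\chi^{(\bm{\beta})}(\bm{y})\,\chi^{(\widetilde{\bm{\beta}})}(\bm{\tilde{y}})/\mathcal{K}$; since \eqref{inv} is an involution by Proposition \ref{Duality} (so that $\chi^{(\bm{\beta})}(\bm{y})\leftrightarrow\chi^{(\widetilde{\bm{\beta}})}(\bm{\tilde{y}})$ and $\mathcal{K}\mapsto\mathcal{K}$), applying \eqref{inv} to this expression turns the right-hand side of the displayed identity into exactly the same thing, which settles the scalar identity.

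The one point that genuinely requires attention is the sign, since \eqref{dualorth} equates signed quantities whereas the argument above only yields equality of squares. Here I would check that $\mathcal{D}_{\bm{\ell}}^{(\bm{\beta})}$ and $\mathcal{D}_{\bm{\tilde{\ell}}}^{(\bm{\tilde{\beta}})}$ are both strictly positive, so that $Z_{\bm{\ell}}(\bm{y};\bm{\beta},M;q)$ and $Z_{\bm{\tilde{\ell}}}(\bm{\tilde{y}};\bm{\tilde{\beta}},M;q)$ share the same sign while $\sqrt{\chi/\Upsilon}$ is positive on both sides. With the parametrization \eqref{Para} and $0<q<1$ one has $\beta_0=\alpha_1>0$ and $\beta_{k+1}-\beta_k=\alpha_{k+2}+1>0$, and likewise $\widetilde{\beta}_0>0$, $\widetilde{\beta}_{k+1}-\widetilde{\beta}_k>0$, together with $M\ge L_s$ and $M\ge\widetilde{L}_s$; hence in $\mathcal{D}_{\bm{\ell}}^{(\bm{\beta})}$ the two factors $(q^{-M};q)_{L_s}$ and $(q^{-M-\beta_0};q)_{L_s}$ each carry the sign $(-1)^{L_s}$ while every other factor is positive, so $\mathcal{D}_{\bm{\ell}}^{(\bm{\beta})}>0$, and the same computation gives $\mathcal{D}_{\bm{\tilde{\ell}}}^{(\bm{\tilde{\beta}})}>0$.

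The genuinely delicate ingredients — Sears' transformation and the one-variable symmetry \eqref{1Dtransformation}, the cancellation of the ${}_4\phi_3$ factors leading to \eqref{polduality}, and the verification of \eqref{dident} — are already in hand, so within the proof of the Corollary the only real work is the bookkeeping above. I expect the sign discussion to be the most error-prone step and would write it out explicitly rather than leaving it implicit; the remainder is a short combination of two identities already established.
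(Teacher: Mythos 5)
Your proof is correct and follows essentially the same route as the paper: the Corollary is obtained by combining the polynomial duality \eqref{polduality} with the identity \eqref{dident} and the invariance of its right-hand side under the involution \eqref{inv}, reducing everything to a scalar identity. You are more explicit than the paper about the sign issue (the paper leaves it implicit); the only small imprecision is that $\tilde{\beta}_{k+1}-\tilde{\beta}_k>0$ holds only for $k\geq 1$ (indeed $\tilde{\beta}_1-\tilde{\beta}_0=-\beta_{s+1}-2M+1<0$), but this suffices since the product in $\mathcal{D}_{\bm{\tilde{\ell}}}^{(\bm{\tilde{\beta}})}$ runs over $j=1,\dots,s$ only.
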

The last equation combined with \eqref{Racah-Rahman} and \eqref{Dual-Parameters} relates the dual orthogonality relation \eqref{Ortho-b} to the orthogonality of the Gasper--Rahman $q$-Racah polynomials.

\section{$q$-Deformed Calogero--Gaudin systems}\label{se4}
In this section, it is shown that the bases of multivariate functions $\Psi_{\bm{n}}^{(\bm{\alpha})}(\bm{y})$ and $\Xi_{\bm{m}}^{(\bm{\alpha})}(\bm{y})$ constructed in Section \ref{se2} are in fact wavefunctions for quantum $q$-deformed Calogero--Gaudin superintegrable systems.

Let us return to the realization \eqref{Other-Realization} of the quantum algebra $su_q(1,1)$; i.e. we take
\begin{align}
\label{Real}
q^{A_0^{(i)}} = q^{y_i + (\alpha_i+1)/2},\qquad A_{+}^{(i)} = \sqrt{\sigma_{y_i+1}^{(\alpha_i)}} T_{y_i}^{+},
\qquad 
A_{-}^{(i)} = \sqrt{\sigma_{y_i}^{(\alpha_i)}} T_{y_i}^{-},
\end{align}
where $T_{y_i}^{\pm} f(y_i)=f(y_i \pm 1)$ is the discrete shift operator in the variable $y_i$, and where $\sigma_{n}^{(\alpha)}$ is given by \eqref{Matrix-Elements}. Following the coproduct construction \eqref{Gen-Rep}, one has $su_q(1,1)$ realizations $su_q^{S}(1,1)$ associated to each set $S= [i;j]$ acting on the variables $y_i,\ldots, y_{j}$. These realizations read
\begin{align*}
A_0^{[i;j]} = \sum_{k=i}^{j}A_0^{(k)}, \qquad A_{\pm}^{[i;j]} = \sum_{k=i}^{j} q^{\sum_{\ell = i}^{k-1}A_0^{(\ell)}}A_{\pm}^{(k)}.
\end{align*}
For a given value of $d$, the ``full'' $su_q(1,1)$ realization corresponds to the set $S=[1;d]$. To each set $S=[i;j]$, one has the intermediate Casimir operators
\begin{align*}
\Gamma^{[i;j]} = \frac{q^{-1/2}q^{A_0^{[i;j]}} + q^{1/2}q^{-A_0^{[i;j]}}}{(q^{1/2}-q^{-1/2})^2} - A_{+}^{[i;j]} A_{-}^{[i;j]} q^{1-A_0^{[i;j]}},
\end{align*}
as per \eqref{Intermediate-Casimir}. Through \eqref{Real}, the intermediate Casimir operators $\Gamma^{[i;j]}$ are easily converted to concrete $q$-difference operators acting on the variables $y_i,\ldots, y_{j}$.  For a given $d$, we define the Hamiltonian
\begin{align}
\label{Hamiltonian}
H = \Gamma^{[1;d]}.
\end{align}
The Hamiltonian \eqref{Hamiltonian} corresponds to a $q$-deformed quantum Gaudin-Calogero system in $(d-1)$ dimension. These systems have been discussed in \cite{Musso2000} in particular. Their integrability was shown, and the eigenvalues and a set of eigenvectors were obtained. In the present approach, it is clear that the Hamiltonian \eqref{Hamiltonian} is in fact superintegrable. Indeed, the elements of the two commutative subalgebras $\langle \Gamma^{[1;2]}, \Gamma^{[1;3]}, \cdots, \Gamma^{[1;d-1]} \rangle$ and $\langle \Gamma^{[2;3]}, \Gamma^{[2;4]}, \ldots, \Gamma^{[2;d]}\rangle$, together with the Hamiltonian $H$, form a set of $2d-3$ algebraically independent symmetries of $H$.
Furthermore, the bases $\Psi_{\bm{n}}^{(\bm{\alpha})}(\bm{y})$ and $\Xi_{\bm{m}}^{(\bm{\alpha})}(\bm{y})$ are wavefunctions for this Hamiltonian satisfying the eigenvalue equations
\begin{align*}
H \,\Psi_{\bm{n}}^{(\bm{\alpha})}(\bm{y}) = E_{N_{d}} \Psi_{\bm{n}}^{(\bm{\alpha})}(\bm{y}),\qquad
H \,\Xi_{\bm{m}}^{(\bm{\alpha})}(\bm{y}) = E_{M_{d}} \Xi_{\bm{m}}^{(\bm{\alpha})}(\bm{y}),
\end{align*}
with energies $E_{N} = \gamma(2 N + A_{d} + d -1)$ as per \eqref{Eigen} and \eqref{eigen-2}. The multivariate $q$-Racah polynomials then correspond to the connection coefficients between two bases for the eigenstates of the quantum Calogero--Gaudin model \eqref{Hamiltonian}.
\section{Conclusion}\label{se5}
Summing up, we have constructed bases of multivariate $q$-Hahn and $q$-Jacobi polynomials in the framework of multifold tensor product representations of the quantum algebra $su_q(1,1)$. We have shown that the Gasper--Rahman multivariate $q$-Racah polynomials arise as the connection coefficients between these bases of $q$-Hahn and $q$-Jacobi polynomials, and we have provided an interpretation for these polynomials in terms of special $3nj$-coefficients for $su_q(1,1)$. Lastly, we have explained how the $q$-Hahn bases can be interpreted as wavefunctions for $q$-deformed quantum Calogero--Gaudin superintegrable systems of arbitrary dimension, and we have given its symmetries in terms of intermediate Casimir operators.

It would be of great interest in the future to determine the invariance algebra generated by the symmetries of the Hamiltonian \eqref{Hamiltonian}. In addition to providing a $q$-extension of the generalized Racah algebra obtained in \cite{DeBie2016, Iliev2016}, it would give an algebraic framework for the bispectral operators of the Gasper--Rahman multivariate $q$-Racah polynomials constructed in \cite{Iliev2011}. It would also allow to define the higher rank Zhedanov algebra. We note that a different interpretation of the bispectral operators for the Gasper--Rahman multivariate $q$-Racah polynomials within the context of the $q$-Onsager algebra was given in \cite{Baseilhac2015}.

It is natural to ask for extensions of the results in the present paper for the multivariate $q$-Racah functions and their bispectral and duality properties established in \cite{GI2011}.
Also of interest would be to consider a similar approach based on the $osp_q(1,2)$ quantum superalgebra. This should give rise to superintegrable Calogero--Gaudin models with $osp_q(1,2)$ symmetry, as introduced in \cite{Musso2005}. It would also provide a framework to obtain a $q$-deformation of the higher rank Bannai--Ito algebra; see \cite{DeBie2016a, Genest2014c, Genest2016}. We plan to report on these questions in the near future.
\section*{Acknowledgments}
This work was initiated during a visit of VXG at Georgia Institute of Technology. VXG holds a postdoctoral fellowship from the Natural Science and Engineering Research Council of Canada (NSERC). The research of PI is supported in part by the Simons Foundation. 
The research of LV is supported in part by NSERC.
\section*{References}

\end{document}